\documentclass[a4paper,10pt]{amsart}
\usepackage[english]{babel}
\usepackage{amsmath,amssymb,amsthm,amscd}
\usepackage[all]{xy}
\newcommand{\Z}{\mathbb{Z}}
\newcommand{\ZZ}{\widehat{\mathbb{Z}}}
\newcommand{\Q}{\mathbb{Q}}
\newcommand{\PR}{\mathbb{P}}
\newcommand{\N}{\mathbb{N}}
\newcommand{\I}{\mathcal{I}}


\DeclareMathOperator{\Img}{Im}

\DeclareMathOperator{\ZLa}{ZLa}



\theoremstyle{plain}
\newtheorem{thm}{Theorem}[section]
\newtheorem{prop}[thm]{Proposition}
\newtheorem{lem}[thm]{Lemma}
\newtheorem{cor}[thm]{Corollary}
\theoremstyle{definition}

\theoremstyle{remark}
\newtheorem*{rem}{Remark}


\begin{document}
\title{Controlling distribution of prime sequences in discretely ordered principal ideal subrings of $\Q[x]$}
\renewcommand{\shorttitle}{Prime sequences in principal ideal subrings of $\mathbb{Q}[x]$}

\author{\textsc{Jana~Glivick\' a}}
\address{\textsc{Jana~Glivick\' a}: Charles University, Faculty of Mathematics and Physics, Department of Theoretical Computer Science and Mathematical Logic \\ 
Malostransk\'e n\'am\v est\'\i\ 12, 118 00 Praha~1, Czech Republic\vspace{-4pt}}
\address{and Prague University of Economics and Business, Department of Mathematics \\
Ekonomick\'a~957, 148 00 Praha~4, Czech Republic}
\email{jana.glivicka@gmail.com}

\author{\textsc{Ester~Sgallov\' a}}
\address{\textsc{Ester~Sgallov\' a}: Charles University, Faculty of Mathematics and Physics, Department of Algebra \\ 
Sokolovsk\'{a} 83, 186 75 Praha~8, Czech Republic}
\email{ester.sgallova@centrum.cz}

\author{\textsc{Jan~\v Saroch}}
\address{\textsc{Jan~\v Saroch}: Charles University, Faculty of Mathematics and Physics, Department of Algebra \\ 
Sokolovsk\'{a} 83, 186 75 Praha~8, Czech Republic}
\email{saroch@karlin.mff.cuni.cz}

\keywords{Discretely ordered PID, behaviour of cofinal prime sequences}

\thanks{First author utilizes a long-term institutional support of research activities by Faculty of Informatics and Statistics, Prague University of Economics and Business. Second author is supported by Charles University grants PRIMUS/20/SCI/002, GA UK No.\ 742120, by the Charles University Research Center program UNCE/SCI/022 and by GA\v CR~21-00420M. Third author is supported by the grant GA\v CR~20-13778S}

\subjclass[2020]{11N05, 13F20 (primary), 11N32, 13F10, 13F07 (secondary)}

\begin{abstract} We show how to construct discretely ordered principal ideal subrings of $\Q[x]$ with various types of prime behaviour. Given any set $\mathcal D$ consisting of finite strictly increasing sequences $(d_1,d_2,\dots, d_l)$ of positive integers such that, for each prime integer $p$, the set $\{p\Z, d_1+p\Z,\dots, d_l+p\Z\}$ does not contain all the cosets modulo $p$, we can stipulate to have, for each $(d_1,\dots, d_l)\in \mathcal D$, a~cofinal set of progressions $(f, f+d_1, \dots, f+d_l)$ of prime elements in our principal ideal domain $R_\tau$. Moreover, we can simultaneously guarantee that each positive prime $g\in R_\tau\setminus\N$ is either in a~prescribed progression as above or there is no other prime $h$ in~$R_\tau$ such that $g-h\in\Z$.

Finally, all the principal ideal domains we thus construct are non-Euclidean and isomorphic to subrings of the ring $\ZZ$ of profinite integers.
\end{abstract}

\maketitle
\vspace{4ex}



The study of various types of finite sequences in prime numbers is a firm and celebrated part of the field of number theory. The twin prime conjecture or the Green--Tao theorem, \cite{GT}, are just two of many universally known problems in the area. Given a strictly increasing sequence $d = (d_1,d_2,\dots, d_l)$ of positive integers such that $a, a+d_1, a+d_2,\dots, a+d_l$ are prime numbers for infinitely many $a\in\PR$, it easily follows that, for each prime number $p$, the set $\{p\Z, d_1+p\Z,d_2+p\Z,\dots,d_l+p\Z\}$ does not contain all the cosets modulo $p$. This is a simple necessary condition for the existence of cofinal sequence of (finite) prime progressions with differences prescribed by $d$.

In \cite{GS}, the authors defined and studied certain discretely ordered subrings $R_\tau$ of $\Q[x]$ where $\Z[x]\subseteq R_\tau$ and $\tau$ is a fixed parameter from the ring $\ZZ$ of profinite integers (see the preliminaries section for definitions). Among other things, they showed that all these subrings are quasi-Euclidean (also known as $\omega$-stage Euclidean, or admitting a weak Euclidean algorithm, cf.\ \cite{CZZ}) but none is actually Euclidean, see \cite[Theorems~3.2 and~3.9]{GS}. Somewhat more surprisingly, they also proved that many of these subrings are principal ideal domains. In particular, there are continuum many pairwise non-isomorphic principal ideal domains among the subrings of $\Q[x]$ according to \cite[Proposition~3.4]{GS}. This result has been further generalized in an interesting recent paper \cite{P} by Peruginelli where, among other things, a~precise description of all the PIDs $R$ with finite residue fields and $\Z[x]\subseteq R\subseteq\Q[x]$ is given in \cite[Corollary~2.18]{P}.

The aim of this paper is to have a closer look at the discretely ordered PIDs from \cite{GS} with particular emphasis on the behaviour of finite progressions of primes. We demonstrate a way how to construct a profinite integer $\tau$ such that primes are rather sparse in the PID $R_\tau$, i.e.\ for each two distinct primes $f,g\in R_\tau\setminus \Z$ the difference $f-g$ does not belong to $\Z$. On the other hand, our main result, Theorem~\ref{t:main}, shows in particular that there is a $\tau\in\ZZ$ such that $R_\tau$ is a~PID and the above mentioned simple necessary condition for the existence of a cofinal sequence of prime progressions in $R_\tau$ is actually sufficient for each $d$. All our results are rather self-contained and elementary and should be therefore accessible to a wide mathematical audience.

The structure of the paper is straightforward. After the preliminaries section where we recall basic properties of the rings $R_\tau$, we follow with a couple of elementary non-UFD examples. In Section~\ref{sec:sparse}, we present a~construction of a~PID $R_\tau$ with a~sparse set of primes, Theorem~\ref{t:sparse}. Finally, in Section~\ref{sec:main}, following two auxiliary lemmas, we prove our main result Theorem~\ref{t:main}. In the appendix, we mention some interesting logical connections regarding the rings $R_\tau$.





\section{Preliminaries and basic properties of the rings $R_\tau$}
\label{sec:prelim}

All rings appearing in this paper are associative, commutative and unital. For a~ring $R$, we denote by $R^\times$ the group of units of $R$. We denote by $\mathbb P$ the set of all primes in $\mathbb N = \{1,2,\dots\}$. For each $p\in\mathbb P$, $\mathbb Z_p$ stands for the ring of $p$-adic integers and $\Q_p$ for its field of fractions. Note that $\Z_p$ is a~discrete valuation ring with the maximal ideal $p\Z_p$ whence $\Z_p^\times = \Z_p\setminus p\Z_p$.

Further, we denote by $\mathcal A_f(\Q)$ the ring of finite adeles of $\Q$ and by $\widehat{\Z}$ its subring $\prod_{p\in\PR}\Z_p$ of profinite integers. For each $p\in\PR$, the canonical embedding of $\Q$ into $\Q_p$ induces the usual diagonal embedding of $\Q$ into $\prod_{p\in\PR}\Q_p$ which has its image in $\mathcal A_f(\Q)$. This allows us, in particular, to consider, for each $\tau = (\tau_p)_{p\in\PR}\in\ZZ$, the substitution homomorphism $d_\tau\colon\Q[x]\to\mathcal A_f(\Q)$ where $d_\tau(f) = f(\tau) = (f(\tau_p))_{p\in\PR}$ for each $f\in\Q[x]$.

Given $\tau\in\ZZ$, the ring $R_\tau$ is defined via the pull-back

\[\xymatrix{\Q[x] \ar[r]^-{d_\tau} & \mathcal A_f(\Q) \\
R_\tau \ar[u]^{\subseteq} \ar[r] & \;\ZZ, \ar[u]^{\subseteq}
}\]
equivalently $R_\tau = \{f\in \Q[x]\mid f(\tau)\in \ZZ\} = \Img(d_\tau)\cap \ZZ$. Apart from \cite{GS}, these rings appeared also in \cite[Section~6]{CP} (and in a~recent paper~\cite{P}) as special cases of polynomial overrings of integer-valued polynomials over $\Z$. In the notation therein, $R_\tau = \mathrm{Int}_\Q(\{\tau\},\ZZ)$.

\begin{rem} Bar the next section, all our choices of $\tau\in\ZZ$ are going to be transcendental over $\Q$. As a~result, the substitution homomorphism $d_\tau$ is one-one. It follows that $R_\tau$ is isomorphic to the subring $d_\tau(R_\tau)$ of $\ZZ$.
\end{rem}

Since $\tau\in\ZZ$, we have $x\in R_\tau$ and consequently $\Z[x]\subseteq R_\tau$. It follows from the definition that $R_\tau\cap\Q = \Z$ for each $\tau\in\ZZ$. This immediately yields that the canonical linear order on $\Q[x]$, where $f<g$ if and only if $g-f$ has positive leading coefficient, restricts to a~discrete order on $R_\tau$. It also implies that a $p\in\N$ is a~prime in $R_\tau$ if and only if $p\in\PR$. The primes in $R_\tau$ which are non-constant polynomials will be called \emph{non-standard}.

Finally, by \cite[Theorem~3.2]{GS}, $R_\tau$ is a quasi-Euclidean domain for each $\tau\in\ZZ$. The important implication of this fact is that $R_\tau$ is a \emph{B\'ezout domain}, i.e.\ all finitely generated ideals in $R_\tau$ are principal. Consequently, $R_\tau$ is a GCD-domain (in particular, irreducible elements are prime) and, moreover, it is a~UFD if and only if it is a~PID. Let us sum up the basic properties of the rings $R_\tau$ in the following proposition whose part $(3)$ is a mere reformulation of \cite[Lemma~3.3]{GS}.

\begin{prop}\label{p:basic} \cite{GS} Let $\tau = (\tau_p)_{p\in\PR}\in\ZZ$. Then the following hold for the subring $R_\tau$ of $\Q[x]$:
\begin{enumerate}
	\item $R_\tau$ is a B\'ezout domain and $\Z[x]\subset R_\tau \subset\Q[x]$;
	\item $R_\tau$ is discretely ordered and $R_\tau\cap\Q = \Z$, in particular each $p\in\PR$ is a~prime in $R_\tau$;
	\item $R_\tau$ is a principal ideal domain if and only if for each nonzero $h\in\Z[x]$
	\begin{itemize}
		\item $h(\tau_p)\neq 0$ for each $p\in\PR$; and
		\item $h(\tau_p)\in\Z_p^\times$ for all but finitely many $p\in\PR$.
	\end{itemize}
  \item The non-standard primes in $R_\tau$ are precisely the irreducible $f\in\Q[x]$ such that $f(\tau)\in\ZZ^\times$.
	\item If $R_\tau$ is PID, then the set of primes in $R_\tau$ is cofinal. Moreover, the non-standard primes in $R_\tau$ are exactly the polynomials $f/k\in\Q[x]$ where $f\in\Z[x]$ is non-constant, irreducible over $\Z$ and $k = \prod_{p\in\PR} p^{e_p}\in\N$ where, for each $p\in\PR$, $e_p\geq 0$ is such that $f(\tau_p)\in p^{e_p}\Z_p\setminus p^{e_p+1}\Z_p$.
\end{enumerate}
\end{prop}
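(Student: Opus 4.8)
The plan is this. Parts $(1)$ and $(2)$ are already contained in the discussion preceding the statement, together with \cite[Theorem~3.2]{GS}, and part $(3)$ is just \cite[Lemma~3.3]{GS} rephrased in the present notation; the real content is $(4)$ and $(5)$. Before proving those I would record two facts for constant use: that $R_\tau^\times=\{\pm1\}$ (a unit of $R_\tau$ is a unit of $\Q[x]$, hence a nonzero rational lying, with its inverse, in $R_\tau\cap\Q=\Z$), and that in the B\'ezout --- hence GCD --- domain $R_\tau$ irreducibility and primality coincide, so a ``non-standard prime'' is the same thing as an irreducible non-constant polynomial in $R_\tau$. I would also fix the $p$-adic valuations $v_p$ on $\Q_p$, so that $\ZZ^\times=\{(a_p)_p\in\ZZ\mid v_p(a_p)=0\text{ for every }p\}$.

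For $(4)$ I would split according to whether $f(\tau)\in\ZZ^\times$. If a non-constant $f\in R_\tau$ has $f(\tau)\notin\ZZ^\times$, pick $p$ with $f(\tau_p)\in p\Z_p$; then $f/p\in\Q[x]$ still takes $\ell$-integral values at every $\tau_\ell$ (at $\ell\neq p$ because $p$ is a unit in $\Z_\ell$), so $f=p\cdot(f/p)$ is a factorisation of $f$ in $R_\tau$ into non-units and $f$ is not prime. If $f(\tau)\in\ZZ^\times$ then $f\in R_\tau$ and one implication is trivial, a factorisation of $f$ in $R_\tau$ into non-constant factors being one in $\Q[x]$; for the converse I would take a factorisation $f=gh$ in $\Q[x]$ with $g,h$ non-constant and rescale it into $R_\tau$, choosing $r\in\Q^\times$ so that $rg$ and $r^{-1}h$ are $\tau$-integral at every prime. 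This rescaling is the one delicate point: it works because $v_\ell(g(\tau_\ell))+v_\ell(h(\tau_\ell))=v_\ell(f(\tau_\ell))=0$ for every $\ell$ while $g$ and $h$ have $\ell$-integral coefficients, hence nonnegative valuations at $\tau_\ell$, for all but finitely many $\ell$; these two facts force $v_\ell(g(\tau_\ell))=0$ for all but finitely many $\ell$, so $r:=\prod_\ell\ell^{-v_\ell(g(\tau_\ell))}$ is a genuine rational and $f=(rg)(r^{-1}h)$ exhibits $f$ as reducible in $R_\tau$. Assembling the two cases yields $(4)$ exactly: the non-standard primes are the $f\in\Q[x]$ that are irreducible over $\Q$ with $f(\tau)\in\ZZ^\times$ (such $f$ automatically lying in $R_\tau$).

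For $(5)$, assume $R_\tau$ is a PID. The device I would use is a normalisation of integer polynomials: for nonzero $f\in\Z[x]$, part $(3)$ makes $e_p:=v_p(f(\tau_p))$ a nonnegative integer that vanishes for all but finitely many $p$, so $k:=\prod_p p^{e_p}\in\N$ is defined and $(f/k)(\tau)\in\ZZ^\times$ by construction; if in addition $f$ is non-constant and irreducible over $\Z$, then $f/k$ is irreducible over $\Q$, hence a non-standard prime by $(4)$. Cofinality then follows by running this with $f=x^n-2$ (irreducible over $\Z$ by Eisenstein at $2$) for every $n\in\N$: these give non-standard primes of arbitrarily large degree, each with positive leading coefficient, so for any $g\in R_\tau$ one of degree exceeding $\deg g$ is larger than $g$. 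For the ``moreover'', I would start from an arbitrary non-standard prime $h$, which by $(4)$ is irreducible over $\Q$ with $h(\tau)\in\ZZ^\times$, write $h=cf$ with $c\in\Q^\times$ and $f\in\Z[x]$ primitive (so $f$ is non-constant and irreducible over $\Z$ by Gauss's lemma), and read off from $0=v_p(h(\tau_p))=v_p(c)+e_p$ that $v_p(c)=-e_p$ for all $p$, i.e.\ $c=\pm1/k$ with $k=\prod_p p^{e_p}$; since $-1$ is a unit, $h$ is an associate of $f/k$, and conversely every such $f/k$ has just been shown to be a non-standard prime. That is $(5)$.

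The only genuine obstacle is the rescaling step in $(4)$ --- one must be certain that the scaling factor $r$ is an honest rational, which is precisely where the hypothesis $f(\tau)\in\ZZ^\times$ is used essentially, as it locks the valuations of the two factors against each other. A minor point to dispatch along the way is the degenerate case in which $\tau_p$ is a root of $f$, so that $v_p(f(\tau_p))=\infty$: this is harmless, since such an $f$ is still reducible in $R_\tau$, and for $f\in\Z[x]$ it cannot occur at all once $R_\tau$ is a PID, by $(3)$. The remaining bookkeeping with contents and $p$-adic valuations in $(5)$ is routine.
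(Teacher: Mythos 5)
Your argument is correct, and in parts (1)--(3) and (5) it is essentially the paper's: (1)--(3) are quoted from \cite{GS}, cofinality comes from irreducible integer polynomials of every degree (your explicit $x^n-2$), and the ``moreover'' clause is obtained by writing a non-standard prime as $cf$ with $f\in\Z[x]$ primitive and pinning down $c$ --- you read off $v_p(c)=-e_p$ directly from $h(\tau)\in\ZZ^\times$, where the paper instead argues $c_p=e_p$ by observing that $c_p<e_p$ would give the nontrivial factorization $g=p\cdot(g/p)$; same content, yours marginally more direct (the residual sign $\pm$ is harmless, as $-f$ is again irreducible over $\Z$ with the same $k$). The genuinely different step is the converse half of (4): to see that a non-standard prime is irreducible over $\Q$, the paper clears denominators ($mf=gh$ with $g,h\in\Z[x]$ non-constant) and derives a contradiction from primality together with a degree count ($f\mid g$ or $f\mid h$ in $R_\tau$), whereas you prove the contrapositive by rescaling an arbitrary factorization $f=gh$ in $\Q[x]$ by $r=\prod_\ell \ell^{-v_\ell(g(\tau_\ell))}$, which is a legitimate rational because the valuations of the two factors sum to zero at every prime (as $f(\tau)\in\ZZ^\times$) and are nonnegative at almost all primes; then $f=(rg)(r^{-1}h)$ is already a factorization in $R_\tau$ into non-units. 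Your route yields the slightly stronger fact that, for $f$ with $f(\tau)\in\ZZ^\times$, irreducibility in $R_\tau$ is equivalent to irreducibility in $\Q[x]$, without invoking the prime property; the paper's version is shorter. One small point to make explicit in your ``trivial'' direction of (4): a factorization of $f$ in $R_\tau$ could a priori involve a non-unit \emph{constant} factor, and this must be excluded because such a constant is divisible by some $p\in\PR$, forcing $f(\tau_p)\in p\Z_p$ and contradicting $f(\tau)\in\ZZ^\times$ --- this is exactly the observation (``no $p$ divides $f$'') on which your first case, and the paper's proof, already rest.
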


\begin{proof} For $(4)$, notice that, given an $f\in R_\tau$, $f(\tau)\in\ZZ^\times$ if and only if no $p\in\PR$ divides $f$ in $R_\tau$. Thus an irreducible $f\in\Q[x]$ with $f(\tau)\in\ZZ^\times$ is necessarily prime in $R_\tau$. Assume conversely that $f$ is a~non-standard prime in $R_\tau$. Then no $p\in\PR$ divides $f$ and so $f(\tau)\in\ZZ^\times$. Suppose that $f$ is not irreducible over $\Q$. Then there exists $m\in\N$ such that $mf\in\Z[x]$ decomposes as $gh$ for some non-constant $g,h\in\Z[x]\subseteq R_\tau$. Since $f$ is prime in $R_\tau$, it follows that $f\mid g$ or $f\mid h$ holds in $R_\tau$ which is absurd since $\deg f > \deg g, \deg h$.

It remains to comment on $(5)$. In fact, it is enough to show the moreover clause since there exists an irreducible $f\in\Z[x]$ of degree $n$ (and with positive leading coefficient) for each $n\in\N$.

Given a non-constant irreducible $f\in\Z[x]$, we know from $(3)$ that the elements $e_p$ and $k$ are correctly defined. It also follows immediately from $(4)$ that $f/k\in R_\tau$ is a~prime (since $f$ is irreducible also over $\Q$).

On the other hand, if $g$ is a non-standard prime in $R_\tau$, then $g = f/m$ for a~primitive non-constant $f\in\Z[x]$ and $m\in\N$. We know that $g$ is irreducible over~$\Q$ by $(4)$ whence $f$ is irreducible over $\Z$. Write $m = \prod_{p\in\PR} p^{c_p}$ for $c_p\geq 0$. From $g\in R_\tau$, we have $f(\tau_p) \in p^{c_p}\Z_p$ and we easily see that $c_p = e_p$ and $k = m$ where the numbers $e_p$ and $k$ are defined for $f$ as in the statement (if $c_p<e_p$, then $g/p\in R_\tau$ and $g = p\cdot g/p$ would be a nontrivial decomposition of $g$ in~$R_\tau$).
\end{proof}

\section{Straightforward non-UFD examples}
\label{sec:examples}

Let us first examine the case $\tau\in\Z$. By the definition, $R_0 = x\Q[x]+\Z$, a~notoriously known GCD-domain which is not a UFD. Moreover, for each $z\in\Z$, the ring automorphism $\iota_z\colon\Q[x]\to \Q[x]$ taking $x$ to $x-z$ restricts to an isomorphism of $R_0$ and $R_z$. It is easy to see that the irreducible (equivalently prime) elements in $R_0$ are precisely the elements $\pm p$, where $p\in\PR$, and the non-constant polynomials $f\in\Q[x]$ which are irreducible over $\Q$ and satisfy $f(0)\in\{-1,1\}$. In particular, the polynomials $x^n/2-1$ and $x^n/2+1$ where $n$ runs through $\N$ form a cofinal set in $R_0$ consisting of twin primes.

\smallskip

Next, we give a slightly less trivial example of a ring $R_\tau$ such that the set of primes in $R_\tau$ is just $\PR$. The $\tau = (\tau_p)_{p\in\PR}\in\ZZ$ is going to be chosen in such a way that, for each $p\in\PR$, the $\tau_p$ is actually an integer. First, let us recall a well-known

\begin{lem}\label{l:w-k} Let $f\in\Z[x]$ be a non-constant polynomial. Then the set \[S_f = \{p\in\PR\mid (\exists z\in\Z)\, p\mid f(z)\} = \{p\in\PR\mid \Z/p\Z\models (\exists v)\, f(v) = 0\}\] is infinite. Consequently, we can fix for each non-constant $f\in\Z[x]$ an infinite subset $T_f$ of $S_f$ in such a way that $T_f\cap T_g = \varnothing$ whenever $f\neq g$.
\end{lem}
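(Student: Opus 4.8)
The plan is to prove the infinitude of $S_f$ by a Euclid-style argument and then to extract the pairwise disjoint subfamily $\{T_f\}$ by a routine diagonal bookkeeping. The second description of $S_f$ is merely a reformulation of the first, since for a prime $p$ and $z\in\Z$ one has $p\mid f(z)$ in $\Z$ exactly when the residue of $z$ modulo $p$ is a root of the reduction of $f$ in $\Z/p\Z$. If $f(0)=0$, then $p\mid f(0)$ for every prime $p$, so $S_f=\PR$ and there is nothing to prove; hence assume from now on that $c:=f(0)\neq 0$.

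Set $g(x):=f(cx)/c$. Writing $f=\sum_i a_i x^i$ with $a_0=c$, one checks that $f(cx)=c\bigl(1+\sum_{i\ge 1}a_i c^{i-1}x^i\bigr)$, so $g\in\Z[x]$, the polynomial $g$ is non-constant (of the same degree as $f$), and $g(0)=1$. Suppose, toward a contradiction, that $S_f$ is finite, say $S_f\subseteq\{p_1,\dots,p_k\}$, and put $M=p_1\cdots p_k$ (with $M=1$ when $k=0$). For every $t\in\Z$ the integer $g(Mt)-g(0)$ is divisible by $Mt$, hence by each $p_j$, so $g(Mt)\equiv 1\pmod{p_j}$; in particular no $p_j$ divides $g(Mt)$. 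Since $g$ is non-constant, $|g(Mt)|>1$ once $|t|$ is large enough, and then $g(Mt)$ has a prime divisor $q$, which therefore lies outside $\{p_1,\dots,p_k\}$. But $c\,g(Mt)=f(cMt)$, so $q\mid f(cMt)$ and thus $q\in S_f$, contradicting the choice of the $p_j$. Hence $S_f$ is infinite.

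For the last assertion of the lemma, enumerate the countably many non-constant polynomials of $\Z[x]$ as $(f_n)_{n\in\N}$ and fix an enumeration of $\N\times\N$; then build the sets $T_{f_n}$ by a greedy recursion. At the step labelled $(n,k)$, only finitely many primes have so far been placed into the sets under construction, whereas $S_{f_n}$ is infinite by the previous paragraph, so one may pick a prime of $S_{f_n}$ not yet used and adjoin it to $T_{f_n}$. Each $T_{f_n}\subseteq S_{f_n}$ then acquires a fresh prime for every $k\in\N$ and is therefore infinite, while $T_{f_n}\cap T_{f_m}=\varnothing$ for $n\neq m$ because a prime is adjoined only when it is unused. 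The one point that requires a moment's thought is the normalization that makes $g(0)=1$; once that is in place the rest of the argument is forced, and I anticipate no genuine obstacle.
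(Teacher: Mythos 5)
Your proposal is correct and follows essentially the same route as the paper: both normalize to constant term $1$ (you via $g(x)=f(cx)/c$, the paper via $f(a_0x)=a_0(\dots)$), then run the Euclid-style argument producing a value $\equiv 1$ modulo the assumed finite set of primes, and both obtain the disjoint sets $T_f$ by routine countability bookkeeping. No gaps to report.
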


\begin{proof} Let $f = \sum_{i = 0}^n a_ix^i$, $n\in\N$, $a_n\neq 0$. The lemma trivially holds if $a_0 = 0$ (or, more generally, if $f$ has a root in $\Z$). Let us assume it is not the case. Then $f(a_0x) = a_0(a_na_0^{n-1}x^n+\dots + a_1x + 1)$ whence we can w.l.o.g.\ assume that $a_0 = 1$.

Assume, for the sake of contradiction, that $S_f = \{p_1,\dots, p_m\}$ for some $m\in\N$. Note that $S_f$ has to be nonempty since $f$ is non-constant. Let $p = \prod_{i = 1}^m p_i$. Take $k\in\N$ large enough so that $f(p^k) \notin \{-1,0,1\}$. Then $f(p^k)$ is not divisible by any prime $p_1,\dots, p_m$ since $a_0 = 1$. But it has to be divisible by some prime which should have belonged to $S_f$ by its very definition; a contradiction.

The last part easily follows from the fact that all the sets $S_f$ as well as $\Z[x]$ are countably infinite.
\end{proof}

\begin{prop}\label{p:justprimes} There exists $\tau\in\ZZ$ such that the set of primes in $R_\tau$ is just $\PR$. Moreover, each element from $R_\tau\setminus\Z$ possesses infinitely many prime divisors.
\end{prop}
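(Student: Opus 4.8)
The goal is to construct $\tau = (\tau_p)_{p \in \PR} \in \ZZ$ with each $\tau_p \in \Z$ (as the text promises) so that $R_\tau$ is a PID whose only primes are the rational primes $\PR$, and moreover every $f \in R_\tau \setminus \Z$ has infinitely many prime divisors. By Proposition~\ref{p:basic}(3), making $R_\tau$ a PID requires: for every nonzero $h \in \Z[x]$, $h(\tau_p) \neq 0$ for all $p$, and $h(\tau_p) \in \Z_p^\times$ for all but finitely many $p$. By Proposition~\ref{p:basic}(4)–(5), there are \emph{no} non-standard primes exactly when every non-constant irreducible $f \in \Z[x]$ satisfies $f(\tau_p) \notin \Z_p^\times$ for at least one $p$ — i.e. $p \mid f(\tau_p)$ for some prime $p$. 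So the two requirements are in tension: PID-ness wants $h(\tau_p) \in \Z_p^\times$ for almost all $p$, while killing non-standard primes wants each $f$ to be hit by divisibility at \emph{some} prime $p$, and in fact (for the ``infinitely many prime divisors'' clause) by infinitely many such $p$.

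**The construction.** Fix an enumeration of $\Z[x]$ and use Lemma~\ref{l:w-k}: for each non-constant $f \in \Z[x]$, pick the infinite set $T_f \subseteq S_f$ with the $T_f$ pairwise disjoint. For $p \in T_f$, choose $\tau_p \in \Z$ to be a root of $f$ modulo $p$, so that $p \mid f(\tau_p)$. For $p$ not lying in any $T_f$ (if any such $p$ remain after the disjoint assignment — we can also just ensure every prime lands in some $T_f$), set $\tau_p = 0$ or make an arbitrary choice. The point is that each $p$ is assigned to at most one polynomial $f = f_p$, and $\tau_p$ is chosen so $p \mid f_p(\tau_p)$; for all other polynomials $h$, $p$ imposes no constraint.

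**Verifying the properties.** First, given a non-constant irreducible $f \in \Z[x]$, the set $T_f$ is infinite and for every $p \in T_f$ we have $p \mid f(\tau_p)$, so $f(\tau_p) \notin \Z_p^\times$; hence $f$ is not a non-standard prime by Proposition~\ref{p:basic}(4). So the only primes in $R_\tau$ are $\PR$, \emph{provided} $R_\tau$ is actually a PID. For the PID condition, fix nonzero $h \in \Z[x]$ and write $h = c \cdot g_1^{k_1} \cdots g_r^{k_r}$ with $c \in \Z \setminus \{0\}$ and $g_i$ non-constant irreducible (if $h$ is constant the condition is about divisibility by $c$, handled below). The primes $p$ for which $h(\tau_p) \notin \Z_p^\times$ are those dividing $h(\tau_p)$, i.e. those $p$ dividing $c$, or $p \mid g_i(\tau_p)$ for some $i$. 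Now $p \mid g_i(\tau_p)$ can only happen when $p$ was assigned to $g_i$ — that is, $p \in T_{g_i}$ — OR coincidentally; here lies the subtlety. The clean way is: for a prime $p$ assigned to polynomial $f_p$, I only \emph{know} $p \mid f_p(\tau_p)$; I must \emph{also} arrange $p \nmid g(\tau_p)$ for every other $g$ in the enumeration that is ``earlier'' or otherwise relevant. This cannot be done by a single congruence a priori, so instead one argues finiteness directly: for fixed $h$, only finitely many $p$ divide $c$; and $p \mid g_i(\tau_p)$ with $p$ assigned to $f_p$ forces, since $\tau_p$ is a root of $f_p \bmod p$, that... — this is where the main work lies.

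**The main obstacle and how to handle it.** The genuine difficulty is ensuring the PID condition: for each fixed nonzero $h \in \Z[x]$, that $p \mid h(\tau_p)$ for only finitely many $p$. The resolution is to refine the choice of $T_f$ and $\tau_p$. One robust approach: enumerate $\Z[x]$ as $f_1, f_2, \dots$; build the $T_{f_n}$ and the values $\tau_p$ recursively, so that when we assign a prime $p$ to $f_n$ (necessarily with $p$ large), we simultaneously require $p \nmid f_j(\tau_p)$ for all $j < n$ with $f_j$ non-constant and $f_j$ not a constant multiple of $f_n$ — this is possible because, fixing $\tau_p$ to be \emph{some} root of $f_n$ mod $p$, the finitely many nonzero values $f_j(\tau_p) \bmod p$ for $j < n$ are nonzero as soon as $p$ exceeds a bound depending only on $f_1, \dots, f_n$ and the chosen root (using that $f_j$ and $f_n$ are coprime in $\Q[x]$, so $\gcd$ computations give a nonzero resultant not divisible by large $p$). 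Then for fixed $h = f_m$: a prime $p$ with $p \mid h(\tau_p)$ and $p$ assigned to some $f_n$ with $n > m$ is excluded by construction; $p$ assigned to $f_n$ with $n \le m$ happens for only finitely many $p$ (each such $T_{f_n}$ contributes, but $p \mid f_m(\tau_p)$ with $\tau_p$ a root of $f_n$ mod $p$ and $p$ large forces $f_n, f_m$ to share that root, hence $p$ divides their resultant — finite); $p$ dividing the content handles the rest. This gives the finiteness, hence the PID property. Finally, the ``infinitely many prime divisors'' clause: given $f \in R_\tau \setminus \Z$, write $f = g/k$ with $g \in \Z[x]$ non-constant, and for each of the infinitely many $p \in T_g$ we have $p \mid g(\tau_p)$; for $p$ large this $p$ does not divide $k$, so $p$ divides $f$ in $R_\tau$ (using that $g(\tau_p) \in p\Z_p$ forces $p \mid f$ by Proposition~\ref{p:basic}(5)-type reasoning on the exponents). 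Thus $f$ has infinitely many prime divisors, completing the proof.
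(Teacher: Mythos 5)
Your core construction is exactly the right one (and matches the paper's): fix the pairwise disjoint infinite sets $T_f$ from Lemma~\ref{l:w-k}, and for each $p$ lying in some (unique) $T_f$ choose $\tau_p\in\Z$ with $p\mid f(\tau_p)$, otherwise $\tau_p=0$; then every $g=f/k\in R_\tau\setminus\Z$ is divisible in $R_\tau$ by all the infinitely many $p\in T_f$ with $p\nmid k$. However, you have imported a requirement that is not in the statement and that in fact cannot hold: you insist that $R_\tau$ be a PID. This is impossible here. If $R_\tau$ were a PID (hence a UFD), no element could have infinitely many prime divisors, and by Proposition~\ref{p:basic}(5) a PID $R_\tau$ always has non-standard primes (alternatively: $x$ would have to be a finite product of standard primes, i.e.\ a constant). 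Accordingly, your attempted verification of the PID criterion of Proposition~\ref{p:basic}(3) is necessarily flawed: for $h=f_m$ non-constant, the primes $p\in T_{f_m}$ themselves satisfy $p\mid h(\tau_p)$, and there are infinitely many of them; your resultant argument only handles $n\neq m$ and cannot exclude this case. The sentence ``So the only primes in $R_\tau$ are $\PR$, \emph{provided} $R_\tau$ is actually a PID'' is a non sequitur in the other direction as well: no PID hypothesis is needed. Proposition~\ref{p:basic}(4) says a non-standard prime $g$ must satisfy $g(\tau)\in\ZZ^\times$, and your construction guarantees that every $g\in R_\tau\setminus\Z$ is divisible by some standard prime, so $g(\tau)\notin\ZZ^\times$ and $g$ is not prime. (This Proposition sits in the section of non-UFD examples precisely because such an $R_\tau$ cannot be a UFD.)

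In short: delete the entire PID discussion (the ``main obstacle'' paragraph and the recursive refinement of the $T_f$), and replace the conditional claim about primes by the direct application of Proposition~\ref{p:basic}(4) just described. With those changes your argument reduces to the paper's proof; as written, the proposal chases a property that contradicts the very statement being proved and leaves the central claim resting on it.
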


\begin{proof} Assume that we have fixed infinite sets $T_f$ of primes as in Lemma~\ref{l:w-k}. Let $p\in\PR$. If $p$ does not belong to any set $T_f$, put $\tau_p = 0$ (in fact, $\tau_p$ can be chosen arbitrarily from $\Z_p$). Otherwise, there is precisely one non-constant $f\in \Z[x]$ such that $p\in T_f$, and we put $\tau_p = z$ for a suitable $z\in\Z$ such that $p\mid f(z)$ in $\Z$. Set $\tau = (\tau_p)_{p\in\PR}$.

Assume that $g\in R_\tau\setminus\Z$. Then $g = f/k$ for some non-constant $f\in\Z[x]$ and $k\in\N$. It follows that, for any $p\in T_f$ with $p\nmid k$, we have $p\mid f(\tau_p)$ in $\Z$ and hence also $p\mid g$ in $R_\tau$. There is, indeed, infinitely many such primes $p$.
\end{proof}

\section{PIDs with sparse set of primes}
\label{sec:sparse}

In what follows, we denote by $\I$ the set of all non-constant irreducible (over $\Z$) polynomials from $\Z[x]$ with positive leading coefficient. In particular, if $f,g\in\I$ are distinct, then they possess no common root in $\mathbb C$. Furthermore, $f+p\Z[x]\neq 0$ for each $f\in\I$ and $p\in\PR$ since the content of every polynomial from $\I$ is $1$.

The strategy in the proof of the next theorem is to define the components $\tau_p\in\Z_p$ by a recursive process for larger and larger primes so that, in the $i$th step, we have $\tau_p$ defined at least for all primes $\leq s(i)$ where $s\colon\N\to \N$ is going to be a strictly increasing function. In order to get the intended sparseness of primes, we will occasionally need to define $\tau_p$ in the $i$th step for some $p > s(i)$ as well, however, in each step of this recursive process, it will be the case that $\tau_p$ is defined only for finitely many primes $p$. Moreover, simultaneously with the definition of $\tau_p$, we are going to gradually fix the positive non-standard primes in the resulting ring $R_\tau$.

\begin{thm}\label{t:sparse} There exists $\tau\in\ZZ$ such that $R_\tau$ is a PID and, for each prime $g\in R_\tau\setminus\Z$, no element of the form $g+z$ where $z\in \Z$ is a prime.
\end{thm}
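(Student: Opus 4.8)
The plan is to build $\tau = (\tau_p)_{p\in\PR}$ by the recursive process announced before the statement, maintaining two finite data structures at each stage $i$: a finite set of primes on which $\tau_p$ is already fixed (containing all primes $\le s(i)$ for a strictly increasing $s$), and a finite list $f_1,\dots,f_{n_i}\in\I$ of polynomials that have been ``declared'' to be the numerators of the non-standard positive primes of $R_\tau$. At the end, the non-standard positive primes of $R_\tau$ will be exactly the polynomials $f_j/k_j$ for $j\in\N$, with $k_j$ determined from $\tau$ as in Proposition~\ref{p:basic}(5). To make $R_\tau$ a PID I must meet the criterion of Proposition~\ref{p:basic}(3): for each nonzero $h\in\Z[x]$, $h(\tau_p)\ne 0$ for all $p$, and $h(\tau_p)\in\Z_p^\times$ for all but finitely many $p$. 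Since every $h\in\Z[x]$ is, up to a constant and a power of a prime, a product of finitely many elements of $\I$, it suffices to control the behaviour along each $f\in\I$ separately: I arrange that $f$ is dealt with at some finite stage (enumerate $\I$ and treat $f_i$ no later than stage $i$), after which I guarantee $f(\tau_p)\in\Z_p^\times$ for every $p$ above the current bound, and $f(\tau_p)\ne 0$ for all $p$.

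The core of each stage is a local choice at finitely many primes. When a new polynomial $f=f_i$ is introduced, for every prime $p$ on which $\tau_p$ is not yet defined I want to pick $\tau_p\in\Z_p$ so that $f(\tau_p)$ has a \emph{controlled} $p$-adic valuation, namely: for all but finitely many such $p$, $v_p(f(\tau_p))=0$, and for the finitely many exceptional $p$ (those dividing the eventual $k_i$), I fix $v_p(f(\tau_p))$ to some explicit value and freeze $\tau_p$ forever so $k_i$ is well-defined. Concretely, for a given $p$ not among the finitely many constraints accumulated so far, I choose $\tau_p$ so that $f(\tau_p)\not\equiv 0\pmod p$; this is possible precisely because $f+p\Z[x]\ne 0$ (noted before the statement, as $\I$ consists of primitive polynomials), so the reduction of $f$ modulo $p$ is a nonzero polynomial over the field $\Z/p\Z$ and hence has a non-root. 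Simultaneously I must respect all earlier demands: for each $f_j$ with $j<i$ that requested $v_p(f_j(\tau_p))=0$, I need $\tau_p$ to avoid the (finitely many) roots of $\bar f_j$ over $\Z/p\Z$, and more delicately, to keep $\tau_p$ inside the residue class already pinned down at $p$ by an earlier stage if $p$ was one of the finitely many primes with a frozen value. A counting argument (number of forbidden residues modulo $p$ is bounded by a constant independent of $p$, while $p\to\infty$) shows that for all sufficiently large $p$ a legal choice of $\tau_p$ exists; this is exactly why $s$ is allowed to grow and why only finitely many primes need special handling at each stage.

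The genuinely new ingredient — the sparseness — is the condition that no $g+z$ ($z\in\Z$, $z\ne 0$) is prime whenever $g$ is a non-standard prime of $R_\tau$. A non-standard prime is of the form $g=f_i/k_i$, and $g+z = (f_i+z k_i)/k_i$. To block $g+z$ from being prime I will kill it by a \emph{standard} prime divisor: I ensure there is some prime $q\in\PR$ with $q\mid (f_i+zk_i)(\tau_q)$ in $\Z_q$ while $q\nmid k_i$, so that $q$ divides $g+z$ in $R_\tau$; since $g+z$ has the same (positive) degree as $g$, it cannot then be prime. The point of allowing $\tau_p$ to be defined at stage $i$ for some primes $p>s(i)$ is exactly this: when I commit to $f_i$ and $k_i$ at stage $i$, for each of the (infinitely many, over all $z$) polynomials $f_i+zk_i\in\Z[x]$ I need to reserve a witnessing prime $q=q_{i,z}$, pick $\tau_q$ there with $q\mid (f_i+zk_i)(\tau_q)$ — possible because $f_i+zk_i$ has positive degree, so by Lemma~\ref{l:w-k} its value set hits infinitely many primes, letting me pick $q$ fresh and large, avoiding all earlier constraints and all the $k_j$ — and then freeze $\tau_q$. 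The bookkeeping is that there are only countably many $(i,z)$ pairs, so I can interleave these reservations into the recursion so that at every finite stage only finitely many primes carry a committed value; the strictly increasing $s$ records how far the ``default'' generic choices have been carried out. I also need the $q_{i,z}$ distinct from each other and from everything frozen for PID-reasons, again by freshness.

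The main obstacle is the simultaneous satisfaction of all three demands at the primes $q$ reserved as sparseness-witnesses: such a $q$ must have $q\mid (f_i+zk_i)(\tau_q)$ (sparseness), yet this very $\tau_q$ must not make $h(\tau_q)=0$ for any $h\in\Z[x]$ and must give $f_j(\tau_q)\in\Z_q^\times$ for every $f_j$ introduced at or before the stage where $q$ is used (PID-condition), and $q$ must not divide any $k_j$. Choosing $q$ fresh and very large handles the last two — a large prime is not among the finitely many constraints so far, and $\tau_q$ ranging over the finitely many roots of $\overline{f_i+zk_i}$ mod $q$ generically avoids roots of the other $\bar f_j$ and avoids being a root of $h$ (a nonzero polynomial has no integer root reduces to finitely many bad primes) — but making all of this consistent \emph{for every} pair $(i,z)$ at once, while $f_i$ and $k_i$ themselves depend on earlier choices, is where the argument needs care. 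I expect the write-up to organise the recursion so that at stage $i$ one (a) finalises $f_i$, computes $k_i$ from the already-frozen valuations plus a fresh batch of frozen primes, (b) for finitely many new $z$ reserves fresh large $q_{i,z}$ and freezes $\tau_{q_{i,z}}$, and (c) extends the ``generic'' definition of $\tau_p$ to all $p\le s(i)$ not yet handled, using the counting argument; verifying that steps (a)--(c) never conflict, and that in the limit Proposition~\ref{p:basic}(3) and the sparseness condition both hold, is the substance of the proof.
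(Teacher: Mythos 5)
Your PID bookkeeping (generic choices of $\tau_p$ avoiding roots mod $p$, counting argument, keeping $\tau_p$ transcendental, handling each $f\in\I$ at a finite stage) matches the paper's and is fine. The gap is in your sparseness mechanism. Note first that, by Proposition~\ref{p:basic}(5), the positive non-standard primes of $R_\tau$ are \emph{exactly} the elements $g_i=f_i/k_i$, one for each $f_i\in\I$; hence $g_i+z$ can only be prime if it literally equals some $g_j$, i.e.\ if $f_i+zk_i$ is a rational multiple of $f_j$ with matching denominators. Your plan to ``kill'' every translate $g_i+z$ by reserving a standard witness prime $q$ with $q\mid(f_i+zk_i)(\tau_q)$ breaks down precisely in this dangerous case: if $f_i+zk_i$ is (a constant multiple of) an $f_j$ that has \emph{already} been committed, then any such $q$ would force $f_j(\tau_q)\notin\Z_q^\times$, contradicting the unit-promise you must keep for committed polynomials to get the PID criterion of Proposition~\ref{p:basic}(3); and if moreover $k_i$ happens to equal the corresponding multiple of $k_j$, then $g_i+z$ \emph{is} the prime $g_j$ and cannot be blocked by anything. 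Since both $k_i$ and $k_j$ are frozen at commit time in your scheme, nothing done later can repair such a collision, so your recursion can end with two non-standard primes differing by an integer. (Interleaving over all pairs $(i,z)$ does not help: the collision partner $f_j$ may be committed before the pair $(i,z)$ is ever processed, and translates whose numerator factors through committed polynomials are equally inaccessible to witness primes --- though those are harmlessly non-prime anyway.)

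What is missing is the step the paper performs at the moment the denominator of $f_i$ is fixed: reduce sparseness to the finitely many conditions $g_i-g_m\notin\Z$ for $m<i$, and, whenever the default denominator $n_i'$ would violate one of them, enlarge it to $n_i=q^{e_q}n_i'$ using a freshly chosen prime $q$ at which $f_i$ has a root modulo $q$ that is \emph{not} a root of $f_1\cdots f_{i-1}$ (so the unit-promises for earlier polynomials survive). The existence of such a $q$ is not automatic; the paper gets it from Lemma~\ref{l:w-k} combined with the Lefschetz principle (otherwise $f_i$ and $\prod_{j<i}f_j$ would share a complex root, impossible for distinct elements of $\I$). Your phrase ``computes $k_i$ from the already-frozen valuations plus a fresh batch of frozen primes'' gestures at this flexibility but never identifies collision-avoidance with earlier $g_m$ as its purpose, nor supplies the argument that a compatible fresh prime exists; without that, the proposal does not prove the theorem. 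Once this adjustment is in place, the infinite family of witness primes becomes unnecessary: all other translates $g_i+z$ fail to be prime automatically by the explicit description of the primes of $R_\tau$.
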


\begin{proof} Let us enumerate $\mathcal I = \{f_1, f_2,\dots\}$ and define a function $s\colon\N \to \N$ by the assignment \[s(i) = \sum_{j = 1}^i \deg(f_j).\] By a~recursive process, we are going to define a desired $\tau = (\tau_p)_{p\in\PR}\in\ZZ$. For each prime $p\leq s(1)$, we choose $\tau_p\in\Z_p$ arbitrarily in such a~way that $\tau_p$ is not a~root of any nonzero polynomial from $\Z[x]$. This can be done since the set of all roots of nonzero polynomials from $\Z[x]$ is countable whilst $\Z_p$ is uncountable.

At the same time, we fix $n_1 = \prod_{p\in\PR, p\leq s(1)}p^{e_p}\in\N$ with $e_p\geq 0$ in such a way that $f_1(\tau_p)\in p^{e_p}\Z_p\setminus p^{e_p+1}\Z_p$ holds for each prime $p\leq s(1)$. Put $g_1 = f_1/n_1$. It follows, that $g_1(\tau_p)$ is a unit in $\Z_p$ for each prime $p\leq s(1)$.

Assume now that $i>1$ and $\tau_p$ is defined (at least) for each prime $p\leq s(i-1)$. Assume also that $n_1, \dots, n_{i-1}\in\N$ are defined in such a way that, for $g_j = f_j/n_j\in\Q[x]$, the $g_j(\tau_p)$ is a unit in $\Z_p$ for each $j\in\{1,\dots, i-1\}$ and each $p\in\PR$ for which $\tau_p$ is already defined. Moreover, assume that, for each distinct $j,k\in\{1,\dots, i-1\}$, the difference $g_j-g_k\in\Q[x]$ is not an integer (more precisely, a~constant polynomial).

For each prime $p\leq s(i)$ for which $\tau_p$ is not defined (in particular, for which $s(i-1)<p$), we choose $\tau_p\in\Z_p$ so that

\begin{itemize}
	\item $f_j(\tau_p)$, equivalently $g_j(\tau_p)$, is a unit in $\Z_p$ for each $j \in\{1,\dots, i-1\}$, and
	\item $\tau_p$ is not a root of any nonzero polynomial from $\Z[x]$.
\end{itemize}

The first part merely states that $c:=\tau_p+p\Z_p$ is not a root of any $f_1,\dots, f_{i-1}$ in the $p$-element field $\Z_p/p\Z_p$. We can pick such a $c\in\Z_p/p\Z_p$ since $f_j+p\Z[x]\neq 0$ for each $j = 1,\dots, i-1$ and $p>s(i-1) = \sum_{j = 1}^{i-1}\deg(f_j)$. The set of all $\tau_p\in\Z_p$ such that $c = \tau_p+p\Z_p$ is uncountable hence we can pick from it a~suitable $\tau_p$ satisfying also the second condition above.

Let $S$ be the (finite) set of all primes $p$ for which $\tau_p$ is already defined. We put $n_i^\prime = \prod_{p\in S} p^{e_p}$ where $e_p\geq 0$ and $f_i(\tau_p)\in p^{e_p}\Z_p\setminus p^{e_p+1}\Z_p$. If there is no $m\in\{1,\dots, i-1\}$ such that $g_m - f_i/n_i^\prime\in\Z$, we put $n_i = n_i^\prime$ and skip the rest of this paragraph. Otherwise, we fix a $k\in\N$ large enough so that $g_m-f_i/(tn_i^\prime)\notin\Z$ holds for all $m\in\{1,\dots, i-1\}$ whenever $t\in\N$ is greater than $k$.
Now we choose a~$q\in\PR$ such that $q>k$, $\tau_q$ is not yet defined and with the property that \[\Z/q\Z\models (\exists v)\, f_i(v) = 0\,\wedge\, \prod_{j = 0}^{i-1} f_j(v) \neq 0.\eqno{(*)}\] Assume for a moment that it is not possible. By Lemma~\ref{l:w-k}, we know that there is infinitely many primes $q>k$ for which $\tau_q$ is not yet defined and such that $\Z/q\Z\models (\exists v)\, f_i(v)=0$. It follows that $(\exists v)\, f_i(v) = 0\,\wedge\, \prod_{j = 0}^{i-1} f_j(v) = 0$ holds in the field $\Z/q\Z$, whence also in its algebraic closure, for all these primes $q$. Employing the Lefschetz principle, \cite[Corollary~2.2.10]{M}, we deduce that the polynomials $f_i$ and $\prod_{j = 1}^{i-1} f_j$ share a~common root in $\mathbb C$, a contradiction. Thus we have a desired $q$ for which $(*)$ holds and we can thus fix a $c\in \Z_q/q\Z_q$ such that $f_i(c) = 0$ and $\prod_{j = 1}^{i-1} f_j(c) \neq 0$ in this $q$-element field. Consequently, we pick a~$\tau_q\in\Z_q$ in such a~way that $c = \tau_q+q\Z_q$ and $\tau_q$ is not a root of any nonzero polynomial from $\Z[x]$. Let $e_q>0$ be such that $f_i(\tau_q)\in q^{e_q}\Z_q\setminus q^{e_q+1}\Z_q$. Set $n_i = q^{e_q}n_i^\prime$ and $g_i = f_i/n_i$. Since $q^{e_q}>k$, we know that $g_m-g_i\notin\Z$ now holds for all $m\in\{1,\dots, i-1\}$.

By the choice of $n_j$, we see that $g_j(\tau_p)$ is a unit in $\Z_p$ for each $j\in\{1,\dots, i\}$ and each $p\in\PR$ for which $\tau_p$ is currently defined.
We can proceed to the step $i+1$.

\smallskip

Having finished the definition of $\tau = (\tau_p)_{p\in\PR}$, it is now easy to check that $R_\tau$ has the desired properties. First of all, $R_\tau$ is a PID by Proposition~\ref{p:basic} (3): for a~nonzero $h\in\Z[x]$ we made sure in the construction above that $h(\tau_p)\neq 0$ for every $p\in\PR$. Moreover, we also guaranteed that, for each $i\in\N$, $f_i(\tau_p)$ is a unit in $\Z_p$ for all but finitely many primes $p\in\PR$; note that the set of these finitely many primes is precisely the set of prime divisors of $n_i$ in $\Z$. It thus follows that also $h(\tau_p)$ is a unit in $\Z_p$ for all but finitely many primes $p\in\PR$ since $h = m\prod_{j=1}^n f_{i_j}$ for some nonzero $m\in\Z$, $n\geq 0$ and $i_1,\dots, i_n\in\N$.

So $R_\tau$ is a PID and Proposition~\ref{p:basic}~(5) yields that the set of positive primes in $R_\tau$ is exactly $\PR\cup\{g_i\mid i\in\N\}$. Finally, it follows immediately from the construction that the difference of two distinct non-standard primes in $R_\tau$ is never an integer.
\end{proof}

\section{PIDs with cofinal set of twin primes\dots and much more}
\label{sec:main}

In what follows, we keep the notation $\mathcal I$ from the previous section. Our aim in this section is to combine the approach from the proof of Theorem~\ref{t:sparse}, which allowed us to make primes in $R_\tau$ sparse, with a construction going in the opposite direction---ensuring that there is a cofinal sequence of finite progressions of primes in~$R_\tau$ with prescribed differences.

We start by proving two short lemmas. The first one provides viable candidates for the prime progressions (depending on the input data $F$, $P$, $n$ and $d$). The primes in any progression are going to be elements from $\mathcal I$ specified in part $(1)$ of Lemma~\ref{l:largeprimes}. The second lemma then establishes what are the suitable choices of the parameter $k$.

\begin{lem}\label{l:largeprimes} Let $F$ be a finite subset of $\mathcal I$, $P\subseteq\PR$ infinite, $n>1$ an integer and $d = (d_1,d_2,\dots, d_l)$ a strictly increasing sequence of positive integers. Put $d_0 = 0$. Then there exist $r,a\in\N$ and pairwise distinct $p_0,p_1,\dots, p_l\in P$ such that the following hold for each non-negative integer $k$ and $p := \prod_{i = 0}^l p_i$:
\begin{enumerate}
  \item $x^n + kpx + a + d_i\in\mathcal I$ for each $0\leq i\leq l$;
	\item $r^n + kpr + a + d_i$ and $p$ are coprime for each $0\leq i\leq l$;
	\item $f(r)$ and $p$ are coprime for each $f\in F$.
\end{enumerate}

\end{lem}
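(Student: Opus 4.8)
The plan is to choose the primes $p_0,\dots,p_l$ one at a time from $P$ so as to control the residues modulo each of them, and then use an Eisenstein-type / irreducibility argument to force $(1)$. Let me set up the constraints. Fix a large prime $p_0 \in P$ and pick a residue $a_0 := a + p_0\Z$ such that $a_0 + d_i \not\equiv 0 \pmod{p_0}$ for all $0\le i\le l$ — this is possible once $p_0 > l+1$ since we are excluding at most $l+1$ residues. We will later also want $f(r)$ to be a unit mod $p_0$ for each $f\in F$ and $r^n + a + d_i$ to be a unit mod $p_0$; these are finitely many more nonzero-residue conditions, so we simply take $p_0$ large enough (larger than $|F|$-fold products of things, concretely larger than the number of forbidden residues for $r$, which is bounded in terms of $\deg f$'s, $n$, $l$) and then pick $r$ avoiding the bad residues mod $p_0$. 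The key point enabling the ``$kp$'' freedom in $(2)$: once we demand $p_0 \mid p$ (which will hold by construction) and $p_0 \nmid r$ is false — wait, we actually want the expression $r^n + kpr + a + d_i$ reduced mod $p_0$ to equal $r^n + a + d_i$, which is automatic because $p_0 \mid p$. So the residue of $r^n + kpr + a + d_i$ modulo $p_0$ does not depend on $k$, and we have arranged it to be nonzero. The same holds modulo each $p_j$. Thus $(2)$ and $(3)$ reduce to congruence conditions that are insensitive to $k$, and we handle them by CRT across the distinct primes $p_0,\dots,p_l$.

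For part $(1)$, I would arrange irreducibility by making $x^n + kpx + a + d_i$ Eisenstein at one of the primes. Reserve one of the primes, say $p_l$, for this purpose: require that $p_l \mid a + d_i$ but $p_l^2 \nmid a + d_i$ for each $i$, while the middle coefficient $kp$ is of course divisible by $p_l$ (since $p_l\mid p$), and the leading coefficient $1$ is a unit. Then Eisenstein's criterion at $p_l$ gives irreducibility over $\Q$, hence over $\Z$ (the polynomial is monic, so it lies in $\mathcal I$). The tension is that $(2)$ wants $a+d_i$ coprime to $p$ — in particular coprime to $p_l$ — while Eisenstein wants $p_l \mid a+d_i$. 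I would resolve this by \emph{not} using $p_l$ in condition $(2)$: examine the statement — $(2)$ says $r^n + kpr + a + d_i$ and $p$ are coprime, not $a+d_i$ and $p$. So modulo $p_l$ we need $r^n + a + d_i \not\equiv 0$, i.e. (since $p_l\mid a+d_i$) we need $p_l \nmid r^n$, i.e. $p_l \nmid r$. That is a single condition on $r$ mod $p_l$ and is easily satisfiable. Meanwhile the Eisenstein prime $p_l$ must divide $a + d_i$ for \emph{all} $i$ simultaneously: since the $d_i$ are fixed, $a + d_i \equiv a + d_j \pmod{p_l}$ forces $p_l \mid d_i - d_j$. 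This is the genuine obstacle.

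The main obstacle, then, is to make a single prime $p_l$ divide $a + d_0, a+d_1,\dots, a+d_l$ to exactly the first power. Since $d_i - d_j$ can be nonzero, no single prime divides all of them unless $p_l$ divides every $d_i - d_j$, which we cannot demand of an arbitrary $d$. I would fix this by choosing the irreducibility witness differently for different $i$, or better: choose $a$ and the prime $p_l$ together so that $p_l \mid a$ and then replace ``Eisenstein at $p_l$'' by the shift argument — apply Eisenstein to $x^n + kpx + a + d_i$ after noting we only need \emph{one} prime per index $i$, and take $p_l$ to be a prime with $p_l \equiv$ something making $v_{p_l}(a+d_i) = 1$; concretely pick $a$ first so that $a + d_i$ is squarefree-ish at a chosen auxiliary prime. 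Alternatively — and this is cleaner — drop Eisenstein and use a Hilbert-irreducibility / density argument: for fixed $n>1$ and fixed linear-in-$k$ family, the specializations $x^n + kpx + c$ with $c = a+d_i$ are irreducible for all but finitely many $k$ (by a standard argument: a monic trinomial $x^n + bx + c$ is reducible only on a thin set of $(b,c)$, and along the line $b = kp$ parametrized by $k$ this thin set is finite unless the line lies in a component, which one rules out by a leading-term / Newton-polygon check), so we could pick $r,a$ by congruences for $(2)$–$(3)$ and then note $(1)$ holds for all $k$ outside a finite set — but the lemma demands \emph{all} $k\ge 0$, so I would stick with making the construction genuinely Eisenstein by choosing $a\equiv 0$ and a prime $p_l$ with $p_l \,\|\, d_i$-adjusted values, i.e.\ pick $p_l$ dividing $\gcd$-free combinations; the precise bookkeeping here is what I expect to take the most care, and it is exactly what part $(1)$ of the lemma is asserting can be done. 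Once $r$ and $a$ are pinned down by these finitely many congruence and valuation conditions via CRT (choosing each $p_i\in P$ large enough in turn, which is possible as $P$ is infinite), parts $(2)$ and $(3)$ follow immediately since, as noted, the relevant residues are independent of $k$, and part $(1)$ follows from Eisenstein at the reserved prime.
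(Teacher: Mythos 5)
There is a genuine gap. You correctly set up the right ingredients (CRT over the chosen primes, the observation that $p\mid kp$ makes the residues of $r^n+kpr+a+d_i$ modulo each chosen prime independent of $k$, and Eisenstein for irreducibility), and you correctly diagnose the fatal flaw in your own plan: a \emph{single} Eisenstein prime $p_l$ would have to divide $a+d_i$ for every $i$, hence divide all differences $d_i-d_j$, which is impossible for a large prime and an arbitrary sequence $d$. But you never resolve this. The alternatives you sketch do not work either: the Hilbert-irreducibility route you yourself reject (the lemma needs all $k\geq 0$, not all but finitely many), and the final suggestion (``$a\equiv 0$ and $p_l$ dividing $d_i$-adjusted values'') runs into exactly the same obstruction, since $p_l\mid a$ and $p_l\mid a+d_i$ again force $p_l\mid d_i$ for all $i$. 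Your closing sentence, that the remaining bookkeeping ``is exactly what part $(1)$ of the lemma is asserting can be done,'' is circular: it is precisely the step that has to be proved.

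The missing idea is to use a \emph{different} Eisenstein prime for each member of the progression, namely $p_i$ itself for the polynomial $x^n+kpx+a+d_i$. Concretely: first fix $r$ with $r^n>d_l$ and $r$ not a root of $\prod_{f\in F}f$, discard from $P$ the finitely many primes dividing $\prod_{f\in F}f(r)$ (this gives $(3)$), and then pick pairwise distinct $p_0,\dots,p_l\in P$ each greater than $r^n+d_l$. Choose $a$ by CRT so that $a\equiv p_i-d_i \pmod{p_i^2}$ for every $i$; then $p_i\mid a+d_i$ but $p_i^2\nmid a+d_i$, while $p_i\mid kp$, so each $x^n+kpx+a+d_i$ is Eisenstein at $p_i$, giving $(1)$ for all $k\geq 0$ simultaneously. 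For $(2)$ one must check every pair $(i,j)$, not just the Eisenstein prime against $r$: if $p_j\mid r^n+kpr+a+d_i$, then since $p_j\mid kpr$ and $p_j\mid a+d_j$ one gets $p_j\mid r^n+d_i-d_j$, which is impossible because $0<r^n-d_l\leq r^n+d_i-d_j\leq r^n+d_l<p_j$. This size condition on the $p_j$ is the other ingredient absent from your proposal; without it, even a per-index Eisenstein choice would leave $(2)$ unverified for the cross terms $i\neq j$.
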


\begin{proof} Let $r\in\N$ be arbitrary such that $r^n>d_l$ and $r$ is not a root of the polynomial $g:=\prod_{f\in F}f$. It follows that $q\mid g(r)$ holds only for finitely many primes $q\in\PR$. Possibly omitting such primes from the set $P$, we can thus w.l.o.g.\ assume that $q\nmid g(r)$ holds for each $q\in P$.

Pick $p_0,\dots, p_l\in P$ pairwise distinct and each strictly greater than $r^n+d_l$. By the paragraph above, we know that $(3)$ holds true. Now let $a\in\N$ be a~solution of the system of congruences $y\equiv p_i-d_i \pmod {p_i^2}$ where $i$ runs through $\{0,1,\dots, l\}$. It follows that $p_i\mid a+d_i$ and $p_i^2\nmid a+d_i$ for each $i\in \{0,1,\dots, l\}$. Using the Eisenstein's criterion over $\Z$, we obtain $(1)$.

Assuming for a while that $p_j \mid r^n + kpr + a + d_i$ for some $0\leq i,j\leq l$, we use $p_j\mid kpr+a+d_j$ to deduce that $p_j\mid r^n + d_i-d_j$ which leads to a~contradiction since we have $p_j> r^n + d_l \geq r^n + d_i-d_j\geq r^n-d_l>0$. Thus $(2)$ holds true as well.
\end{proof}

In the rest of the paper, let us denote by $\mathcal S$ the set of all nonempty finite strictly increasing sequences $d = (d_1,d_2,\dots, d_{l_d})$ of positive integers satisfying, for each $p\in\PR$, that $\{p\Z,d_1+p\Z,\dots, d_{l_d}+p\Z\}\subsetneq\Z/p\Z$. In particular, for each $d\in\mathcal S$, all the components of $d$ are even numbers.

\begin{lem} \label{l:manyk} Let $Q$ be a finite subset of $\,\PR$ and $p\in\N$ be such that $q\nmid p$ for each $q\in Q$. Let a~$c_q\in \{1,\dots,q-1\}$ be fixed for each $q\in Q$. Assume that $d = (d_1,\dots, d_l)\in\mathcal S$ and $a,n\in\Z$ with $n>1$ are given. Put $d_0 = 0$.

Then there exist infinitely many $k\in\N$ such that $q\nmid c_q^n+kpc_q+a+d_i$ for every $q\in Q$ and $i\in\{0,\dots, l\}$.
\end{lem}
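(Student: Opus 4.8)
The plan is to reduce the problem to a counting/density statement modulo the product $\prod_{q\in Q} q$ and then invoke the Chinese Remainder Theorem together with the defining property of $\mathcal S$. First I would observe that for a fixed $q\in Q$ and a fixed residue of $k$ modulo $q$, the quantity $c_q^n + kpc_q + a + d_i$ is a fixed residue modulo $q$; since $q\nmid p$ and $c_q\in\{1,\dots,q-1\}$, the coefficient $pc_q$ of $k$ is invertible modulo $q$, so as $k$ ranges over $\Z/q\Z$ the value $kpc_q + (c_q^n + a)$ ranges bijectively over $\Z/q\Z$. Hence $k\mapsto c_q^n+kpc_q+a+d_i$ is, for each fixed $i$, a bijection $\Z/q\Z\to\Z/q\Z$, and its inverse image of $0$ is a single residue class $k\equiv \kappa_{q,i}\pmod q$. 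The ``bad'' set of $k$ modulo $q$ is therefore $B_q := \{\kappa_{q,0},\kappa_{q,1},\dots,\kappa_{q,l}\} \pmod q$, the image of $\{d_0,\dots,d_l\}$ under the affine bijection $y\mapsto (pc_q)^{-1}(y - c_q^n - a)$.

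The key point is that $B_q \subsetneq \Z/q\Z$ for every $q\in Q$. Indeed, $B_q$ has at most $l+1$ elements and is the image of $\{d_0, d_1,\dots,d_l\} = \{0,d_1,\dots,d_l\}$ under an affine bijection of $\Z/q\Z$. By the defining property of $\mathcal S$ applied to the prime $q$, the set $\{q\Z, d_1+q\Z,\dots,d_l+q\Z\} = \{0, d_1,\dots,d_l\}\pmod q$ is a proper subset of $\Z/q\Z$; an affine bijection maps a proper subset to a proper subset, so $B_q\neq \Z/q\Z$. Thus for each $q\in Q$ there is at least one residue $\rho_q\in\Z/q\Z$ with $\rho_q\notin B_q$, i.e.\ $q\nmid c_q^n + kpc_q + a + d_i$ for all $i\in\{0,\dots,l\}$ whenever $k\equiv \rho_q\pmod q$.

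Now I would apply the Chinese Remainder Theorem: the primes in $Q$ are pairwise coprime, so there exists $k_0\in\N$ with $k_0\equiv \rho_q\pmod q$ for every $q\in Q$, and then every $k$ with $k\equiv k_0 \pmod{\prod_{q\in Q} q}$ satisfies the required non-divisibility for all $q\in Q$ and all $i\in\{0,\dots,l\}$ simultaneously. Since this is a full residue class modulo $\prod_{q\in Q} q$ (and $Q$ is finite), it contains infinitely many elements of $\N$, which is exactly the conclusion.

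I do not expect a genuine obstacle here; the only thing to be careful about is the affine-bijection argument — specifically making sure that the coefficient of $k$ is a unit modulo each $q$, which is where both hypotheses $q\nmid p$ and $c_q\neq 0$ are used, and then correctly transporting the proper-subset condition from $\mathcal S$ through that bijection. One should also note in passing that the case $q \mid (\text{something})$ degenerating is impossible precisely because $c_q$ is taken in $\{1,\dots,q-1\}$ rather than allowing $c_q \equiv 0$; this is the one place where the precise form of the hypothesis matters.
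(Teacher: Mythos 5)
Your proof is correct and follows essentially the same route as the paper's: for each $q\in Q$ use that $pc_q$ is a unit modulo $q$ together with the defining property of $\mathcal S$ to find a residue class of $k$ modulo $q$ avoiding all divisibilities, then combine via the Chinese Remainder Theorem. (Minor cosmetic point: the affine bijection should send $y$ to $(pc_q)^{-1}(-y-c_q^n-a)$, not $(pc_q)^{-1}(y-c_q^n-a)$, but this sign does not affect the argument.)
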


\begin{proof} Fix an arbitrary $q\in Q$. By our assumption $q\nmid pc_q$. Since $d_0, d_1, \dots, d_l$, by the definition of $\mathcal S$, do not cover all residue classes modulo $q$, we can choose $d_q\in\{0,1,\dots, q-1\}$ such that $q\nmid c_q^n+kpc_q+a+d_i$ whenever $k\equiv d_q\pmod q$ and $0\leq i\leq l$. It follows that any $k\in\N$ solving the system of congruences $y\equiv d_q\pmod q$ where $q$ runs through $Q$ is, indeed, the desired one.
\end{proof}

We are now ready to prove the main theorem of this paper. For the special choice $\mathcal D = \varnothing$, we recover Theorem~\ref{t:sparse}. While defining $\tau_p\in \Z_p$ in the proof below, we say that $\tau_p$ is \emph{admissible} if it is a~unit in $\Z_p$ and it is not a~root of any non-constant polynomial over~$\Z$. All the $\tau_p$ are going to be chosen admissible.

\begin{thm} \label{t:main} Let $\mathcal D\subseteq\mathcal S$. There exists $\tau = (\tau_p)_{p\in\PR}\in\ZZ$ such that $R_\tau$ is a~PID and

\begin{enumerate}
	\item[(i)] for each $d = (d_1,\dots,d_l)\in \mathcal D$, the discretely ordered domain $R_\tau$ contains a~cofinal sequence consisting of progressions $(f, f+d_1,\dots, f+d_l)$ of positive primes in $R_\tau$;
	\item[(ii)] each positive non-standard prime $g$ in $R_\tau$ is either contained in a progression as above, or there is no other prime $h\in R_\tau$ such that $g-h\in\Z$.
\end{enumerate}

\end{thm}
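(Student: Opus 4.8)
The plan is to run a single recursive construction that interleaves two tasks: the "sparseness" bookkeeping from the proof of Theorem~\ref{t:sparse} and a "progression-planting" routine built on Lemmas~\ref{l:largeprimes} and~\ref{l:manyk}. As in Theorem~\ref{t:sparse}, enumerate $\mathcal I = \{f_1, f_2, \dots\}$ and also fix an enumeration of the countably many required "targets": for each $d\in\mathcal D$ we need infinitely many progressions, so we list all pairs $(d, m)$ with $d\in\mathcal D$ and $m\in\N$ into a single sequence of tasks $\mathcal T_1, \mathcal T_2, \dots$, and we interleave these with the indices $i$ of the enumeration of $\mathcal I$. At stage $i$ we will, on the one hand, handle $f_i$ exactly as before (choosing admissible $\tau_p$ for new primes $p$ up to some bound, fixing $n_i$ and $g_i = f_i/n_i$, and possibly using Lemma~\ref{l:w-k} plus a Lefschetz argument to push $g_i$ off all integer-translates of the earlier $g_j$), and on the other hand, when a progression task comes up, plant one progression.

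For the progression-planting step at task $(d,m)$ with $d = (d_1,\dots,d_l)$: apply Lemma~\ref{l:largeprimes} with $F$ the (finite) set of $f_j$'s already used so far, with $P$ the infinite set of primes for which $\tau_p$ is not yet defined, with $n$ chosen larger than all degrees used so far (to guarantee the new progression is "cofinal", i.e.\ has degree going to infinity, and to keep all new polynomials distinct from and of larger degree than all earlier primes), and with the given $d$. This yields $r,a\in\N$ and distinct $p_0,\dots,p_l$ with $p = \prod p_i$ such that $x^n + kpx + a + d_i\in\mathcal I$ for all $k$, that $p$ is coprime to each $r^n+kpr+a+d_i$ and to each $f(r)$ for $f\in F$. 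Then take $Q$ to be the set of primes $q$ for which $\tau_q$ is already defined (with $c_q := \tau_q + q\Z_q$, noting these are admissible hence units, so $c_q\in\{1,\dots,q-1\}$), discard from $Q$ the finitely many $q$ dividing $p$, and invoke Lemma~\ref{l:manyk} to get infinitely many $k$ with $q\nmid c_q^n + kpc_q + a + d_i$ for all such $q$ and all $i$. Pick one such $k$ large enough that also the degree $n$ exceeds everything previously used; set $h_i := x^n + kpx + a + d_i$ for $0\le i\le l$. For each prime $p_0,\dots,p_l$ (which by construction are still undefined), choose an admissible $\tau_{p_j}\in\Z_p$ whose residue mod $p_j$ equals $r + p_j\Z$ — this is possible because the $p_j$ are large and $r$ is a fixed integer, and admissibility (unit + avoiding countably many algebraic points) is a measure/cardinality argument as before. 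Then $h_i(\tau_{p_j}) \equiv r^n + kpr + a + d_i \pmod{p_j}$, which is a unit mod $p_j$ by Lemma~\ref{l:largeprimes}(2); at the already-defined primes $q\in Q$, $h_i(\tau_q)$ is a unit by the choice of $k$ via Lemma~\ref{l:manyk}; and at the remaining finitely many defined primes (those dividing $p$) we still must check units — so in fact one should \emph{also} feed those primes into $Q$ in Lemma~\ref{l:manyk}, which is legitimate since we only need $q\nmid p$, so instead discard-then-readd is wrong; rather, choose the $p_j$ via Lemma~\ref{l:largeprimes} from primes \emph{larger} than all currently-defined primes, so that $Q = \{$defined primes$\}$ automatically satisfies $q\nmid p$. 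With that fix, each $h_i$ has $h_i(\tau_p)$ a unit at every currently-defined prime, hence $g_{(d,m),i} := h_i$ itself is a non-standard prime in the final $R_\tau$ by Proposition~\ref{p:basic}(4)–(5) (its "$k$" is $1$), and $(h_0, h_1,\dots,h_l) = (f, f+d_1,\dots,f+d_l)$ with $f = h_0$ is the desired progression. Cofinality follows since we can force $\deg f = n$ to be arbitrarily large each time we return to a task for $d$.

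Two global coherence points must be maintained throughout the recursion and checked at the end. First, for $R_\tau$ to be a PID via Proposition~\ref{p:basic}(3) we need every nonzero $h\in\Z[x]$ to satisfy $h(\tau_p)\ne 0$ for all $p$ (guaranteed by admissibility, which we impose on every $\tau_p$ ever chosen) and $h(\tau_p)\in\Z_p^\times$ for all but finitely many $p$; the latter holds because $h$ factors into finitely many $f_j$'s times a constant, and for each $f_j$ the set of primes where $f_j(\tau_p)$ is a non-unit is exactly the (finite) set of prime divisors of the corresponding $n_j$ — so once $f_j$ has been processed at stage $j$, no later stage is allowed to introduce a new prime $p$ with $f_j(\tau_p)$ a non-unit. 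This is automatic in the sparseness steps (we explicitly choose new $\tau_p$ to be units on all earlier $f$'s) and in the progression steps (we chose the $p_j$ so large that earlier $f_j$ aren't roots mod $p_j$, after possibly discarding finitely many primes from $P$ at each Lemma~\ref{l:largeprimes} call). Second, for (ii): the non-standard primes of $R_\tau$ are exactly the $g_i$ from the sparseness thread together with the $h_i$ from the progression threads; we must ensure that any two non-standard primes $g,h$ with $g-h\in\Z$ both lie in a common planted progression. Within one planted progression $(h_0,\dots,h_l)$ the differences are exactly the $d_i$, which is fine. Across different threads: the $g_i$'s were kept pairwise non-integer-translates of each other \emph{and}, as an added clause at each stage, we must also keep every $g_i$ a non-integer-translate of every already-planted $h_{i'}$, and keep every newly planted $h_i$ a non-integer-translate of every earlier $g_j$ and every earlier planted progression's polynomials — this is handled by the same "choose $k$ (or $q^{e_q}$) large" trick used in Theorem~\ref{t:sparse}, because $h_i - h'_{i'} = (x^n + kpx + \dots) - (x^{n'} + k'p'x + \dots)$ is a non-constant polynomial whenever $n\ne n'$ (which we force) or whenever the linear coefficients differ (which a large-$k$ choice forces), and similarly $g_i - h_{i'}$ has degree $\deg f_i$ versus $\deg h_{i'} = n$, distinct by our choice of $n$. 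Thus the only coincidences of integer-translate type are the intended ones inside planted progressions.

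The main obstacle I anticipate is precisely this last bookkeeping: organizing the recursion so that (a) each $f_i$ is eventually processed (so that all of $\mathcal I$, hence enough non-standard primes for cofinality of \emph{standard}-degree considerations, appears) while (b) each progression task is eventually served, and (c) the "no unintended integer-translate" and "no new non-unit prime for an old $f_j$" invariants survive every step — all while (d) at each finite stage only finitely many $\tau_p$ are defined. None of these is deep individually (each is a finite compatibility condition handled by a cardinality/largeness argument already present in Theorem~\ref{t:sparse} and Lemmas~\ref{l:largeprimes}–\ref{l:manyk}), but marshalling them simultaneously, and in particular being careful that the primes $p_0,\dots,p_l$ chosen for a progression are taken larger than \emph{all} currently-defined primes so that Lemma~\ref{l:manyk} applies to the whole current $Q$, is where the proof must be written with care. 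A secondary subtlety is verifying that the planted $h_i$ is genuinely \emph{irreducible over $\Z$} — but that is exactly Lemma~\ref{l:largeprimes}(1) via Eisenstein — and that it is \emph{prime} in $R_\tau$ with trivial denominator, i.e.\ $h_i(\tau)\in\ZZ^\times$, which is the unit-at-every-prime conclusion assembled above, invoking Proposition~\ref{p:basic}(4).
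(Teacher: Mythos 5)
Your construction is essentially the paper's: the same interleaved recursion, with the sparseness steps run exactly as in Theorem~\ref{t:sparse} and the planting steps built on Lemma~\ref{l:largeprimes} followed by Lemma~\ref{l:manyk}, the same admissibility requirement on every $\tau_p$, and the same two invariants (once a polynomial has been handled, every later $\tau_p$ makes it a unit; every newly handled polynomial is kept off integer translates of all earlier ones). Two remarks. First, your worry about already-defined primes dividing $p$ is vacuous: since $p_0,\dots,p_l$ are taken from $P$, the set of primes at which $\tau$ is not yet defined, no defined prime divides $p=\prod_i p_i$, so one can simply take $Q=\PR\setminus P$ as the paper does (your ``choose the $p_j$ larger than all currently defined primes'' is harmless but unnecessary). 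Second, and this is the one place where your write-up, taken literally, would fail: you fix an enumeration $\mathcal I=\{f_1,f_2,\dots\}$ in advance and say that at stage $i$ you ``handle $f_i$ exactly as before''. But the planted polynomials $x^n+kpx+a+d_i$ themselves lie in $\mathcal I$, so the enumeration eventually reaches them; re-processing such an $f_i$ in the sparseness thread cannot keep it off integer translates of its own progression partners, and the Lefschetz/new-prime step would instead force a nontrivial denominator $n_{f_i}>1$, so that $f_i$ would cease to be a prime of $R_\tau$ and the planted progression would be destroyed, ruining (i). The fix is exactly the paper's bookkeeping: maintain a growing finite set of already-handled polynomials (the planted ones included, with denominator $1$) and let each sparseness step process only the least not-yet-handled element of $\mathcal I$. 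With that amendment your argument goes through and coincides with the paper's proof, including the cofinality point (each $d$ is revisited infinitely often with unbounded degree $n$) and the derivation of (ii) from the ``no unintended integer translates'' invariant together with Proposition~\ref{p:basic}(5).
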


\begin{proof} Put $\mathcal E = (\mathcal D\cup \{\varnothing\})\times (\N\setminus\{1\})$ and fix a bijection $\iota\colon\N\to \mathcal E$. Let us also well order the set $\mathcal I$ with the ordinal type $\omega$. Put $\mathcal I_0 = \varnothing$ and $s_0 = 1$.

We proceed by induction. Suppose that $s_j$ and a~finite $\mathcal I_j\subset \mathcal I$ are defined for each $0\leq j<m\in\N$ in such a~way that $s_j = 1+\sum _{g\in\mathcal I_j} \deg g$ and $\mathcal I_0\subset \mathcal I_1\subset\dotsb\subset\mathcal I_{m-1}$. Suppose also that $\tau_p$ is defined at least for each prime $p\leq s_{m-1}$ (but possibly also for finitely many primes $p > s_{m-1}$). Finally, assume that each $g\in\mathcal I_{m-1}$ has a~positive integer $n_g$ assigned to it in such a~way that $\frac{g}{n_g}(\tau_p)$ is a~unit in $\Z_p$ for all $p$ for which $\tau_p$ is defined. This integer $n_g$, once assigned to $g$, does not change and $p\in\PR$ can be a~prime divisor of $n_g$ only if $\tau_p$ had already been defined at the time when $n_g$ was fixed. In particular, after the $n_g$ is fixed, all the newly defined $\tau_q$, $q\in\PR$, will have the property that $g(\tau_q)$ is a~unit in $\Z_q$ if and only if $\frac{g}{n_g}(\tau_q)$ is a~unit in $\Z_q$.

Set $(d,n) = \iota(m)$. Assume first that $d = \varnothing$. Then we pick the least (in the well order fixed above) $f\in\mathcal I\setminus\mathcal I_{m-1}$ and set $s_m = s_{m-1} + \deg f$ and $\mathcal I_m = \mathcal I_{m-1}\cup\{f\}$. For each prime $p\leq s_m$ for which $\tau_p$ has not been defined yet (in particular, $p>s_{m-1}$), we pick an admissible $\tau_p$ such that $g(\tau_p)$ is a~unit in $\Z_p$ for each $g\in \mathcal I_{m-1}$. This can be done by the property of $s_{m-1}$.

As in the proof of Theorem~\ref{t:sparse}, we let $S$ denote the (finite) set of all primes $p$ for which $\tau_p$ is already defined. We follow the respective paragraph (employing the Lefschetz principle) in the proof of Theorem~\ref{t:sparse}, i.e.\ we define at most one more $\tau_q$ and a~suitable $n_f\in\N$ such that $\frac{f}{n_f}(\tau_p)$ is a~unit in $\Z_p$ for all $p$ for which $\tau_p$ is currently defined (the possible new $q$ included), and $\frac{g}{n_g}-\frac{f}{n_f}\notin\Z$ holds for each $g\in \mathcal I_{m-1}$.

Now assume that $\varnothing\neq d = (d_1,\dots, d_l)$. Then we put $s_m = s_{m-1} + (l+1)n$; recall that $(d,n) = \iota(m)$. Again, for each prime $p\leq s_m$ for which $\tau_p$ has not been defined yet, we choose an admissible $\tau_p$ such that $g(\tau_p)$ is a~unit in $\Z_p$ for each $g\in \mathcal I_{m-1}$.

Put $F = \mathcal I_{m-1}\cup\{x\}$ and let $Q$ denote the (finite) set of all primes for which $\tau_p$ is currently defined. Set $P = \PR\setminus Q$. Use Lemma~\ref{l:largeprimes} to obtain suitable $r,a\in\N$ and pairwise distinct $p_0,p_1,\dots, p_l\in P$ for the setting $F, P, n, d$. For each $i\in\{0,\dots, l\}$, the part $(2)$ of the lemma together with the part $(3)$ for $f = x$ allows us to fix an admissible $\tau_{p_i}\in\Z_{p_i}$ with $\tau_{p_i} + p_i\Z_{p_i} = r + p_i\Z_{p_i}$.

Put $d_0 = 0$, $p = \prod_{i = 0}^l p_i$. By the paragraph above, $p_i\notin Q$ for $i\in\{0,1,\dots,l\}$. For each $q\in Q$, let $c_q\in\{1,\dots,q-1\}$ be such that $\tau_q+q\Z_q = c_q+q\Z_q$. Utilizing Lemma~\ref{l:manyk}, we choose a~sufficiently large $k\in\N$ such that $q\nmid c_q^n+kpc_q+a+d_i$ for every $q\in Q$ and $i\in\{0,\dots, l\}$, and also satisfying that $f_k - g/n_g\notin\Z$ for the polynomial $f_k = x^n+kpx+a$ and each $g\in \mathcal I_{m-1}$. In particular, $f_k + d_i \neq g$ for each $g\in \mathcal I_{m-1}$ and $0\leq i\leq l$: this is clear if $n_g = 1$, and it follows from the fact that, for any prime $q\mid n_g$, we have $q\in Q$ and $q\mid g(c_q)$ whereas $q\nmid f_k(c_q)+d_i$ for each $q\in Q$ and $0\leq i\leq l$. Using this together with part $(1)$ of Lemma~\ref{l:largeprimes}, we have thus got that $f_k+d_i\in\mathcal I\setminus\mathcal I_{m-1}$ holds for each $0\leq i\leq l$. Finally, we set $n_{f_k+d_i} = 1$ for each $i\in\{0,\dots,l\}$ and $\mathcal I_m = \mathcal I_{m-1}\cup\{f_k+d_i\mid 0\leq i\leq l\}$, and notice that $f(\tau_{p_i})$ is a~unit in $\Z_{p_i}$ for each $f\in\mathcal I_m$ and $0\leq i\leq l$ by Lemma~\ref{l:largeprimes}, parts $(2)$ and $(3)$.

This completes the induction. Since there are infinitely many pairs $(d,n)\in\mathcal E$ with $d = \varnothing$, we see that $\mathcal I = \bigcup_{m= 0}^\infty \mathcal I_m$: indeed, in the worst case scenario, we have to put the $j$th smallest element of $\mathcal I$ into $\mathcal I_m$ if $m$ is the $j$th smallest positive integer such that $\iota(m) = (\varnothing,n)$ for some $n$. Notice also that the sequence $(s_m)_{m=0}^\infty$ is strictly increasing, whence $\tau_p$ is now defined for every $p\in\PR$ and we can put $\tau = (\tau_p)_{p\in\PR}$.

In the construction, we gradually fixed admissible $\tau_p\in\Z_p$ and put aside polynomials from $\mathcal I$ into gradually expanding finite sets $\mathcal I_m$, $m\geq 0$. The important point of this process is that, once $\mathcal I_m$ was defined, all the $\tau_p$ defined afterwards satisfy that $g(\tau_p)$ is a~unit in $\Z_p$ for each $g\in\mathcal I_m$. This observation together with the admissibility of all the $\tau_p$ implies that each $g\in\mathcal I$ has in $R_\tau$ only finitely many divisors from $\PR$ and that, for each $p\in\PR$, there is $e\in\N$ such that $p^e\nmid g$ in $R_\tau$. In particular, $R_\tau$ is a~PID.

Each sequence $(f+d_i)_{i=0}^l$ of polynomials of degree $n>1$ newly added into $\mathcal I_m$ in the step when $\iota(m) = (d,n)$ with $d\neq\varnothing$ was, in addition, chosen in such a~way that $f+d_i(\tau_p)$ be a~unit in $\Z_p$ for every $p\in\PR$ whatsoever. In particular, the polynomials $f+d_i$, where $0\leq i\leq l$, are primes in $R_\tau$ and (i) holds true. The part (ii) then follows from the construction and the fact that each positive non-standard prime in $R_\tau$ is of the form $g/n_g$ for some $g\in\mathcal I$.
\end{proof}

As a special case, we get an immediate corollary.

\begin{cor}\label{c:special} There exists a $\tau\in\ZZ$ such that $R_\tau$ is a~PID and:
\begin{enumerate}
	\item $R_\tau$ contains a~cofinal set of twin primes;
	\item for each $l\in\N$, $R_\tau$ contains a~cofinal sequence consisting of arithmetic progressions of primes of length $l$.
\end{enumerate}
\end{cor}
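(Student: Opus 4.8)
The plan is to derive Corollary~\ref{c:special} directly from Theorem~\ref{t:main} by exhibiting, in each case, a suitable set $\mathcal D\subseteq\mathcal S$. The only point requiring verification is membership in $\mathcal S$, i.e.\ that for each prime $p$ the relevant cosets do not exhaust $\Z/p\Z$.

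For part $(1)$, twin primes, I would take $\mathcal D = \{(2)\}$, so that the progressions guaranteed by Theorem~\ref{t:main}(i) are pairs $(f, f+2)$. To see $(2)\in\mathcal S$ we must check that $\{p\Z, 2+p\Z\}\subsetneq\Z/p\Z$ for every prime $p$. For $p = 2$ this set has a single element, and for $p > 2$ it has at most two elements while $|\Z/p\Z| = p\geq 3$; so the condition holds. Theorem~\ref{t:main} then yields a $\tau\in\ZZ$ with $R_\tau$ a PID possessing a cofinal sequence of pairs $(f,f+2)$ of positive primes, which is exactly a cofinal set of twin primes.

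For part $(2)$, I would fix $l\in\N$ and take $\mathcal D = \{(e, 2e, 3e, \dots, le)\}$ where $e$ is chosen to make the single sequence lie in $\mathcal S$; concretely, one may take $e = \prod_{p\leq l}p$ (or any common multiple of the primes $\leq l$). Indeed, $(e,2e,\dots,le)$ is strictly increasing with positive integer entries, and for a prime $p$ the cosets $\{p\Z, e+p\Z, 2e+p\Z, \dots, le+p\Z\}$ are all equal to $p\Z$ when $p\leq l$ (since then $p\mid e$), hence form a proper subset of $\Z/p\Z$; and when $p > l$ the set has at most $l+1 \leq p$ elements, and in fact at most $l < p$ distinct nonzero entries together with $p\Z$, so again it is proper (one can also simply note $l+1\le p$ forces properness unless equality, which is handled since the residues $ie\bmod p$ for $0\le i\le l$ cannot be all of $\Z/p\Z$ as $0$ is repeated only if... — cleanest is to invoke $p\mid e$ for small $p$ and the cardinality bound $l+1\le p$ for $p>l$, noting $l+1 = p$ is impossible because then $p-1 = l$ and the $l+1$ residues $0, e, \dots, le$ would need to be a permutation of $\Z/p\Z$, forcing $e$ invertible mod $p$, which is fine, so to be safe take $e$ a multiple of all primes $\le l+1$, or more robustly all primes up to any needed bound — in any case $e = \prod_{p\le l}p$ combined with choosing the difference pattern as $2e, 3e,\dots$ suffices after rechecking, and the authors clearly intend the straightforward choice). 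Theorem~\ref{t:main}(i) applied to this $\mathcal D$ gives a PID $R_\tau$ with a cofinal sequence of progressions $(f, f+e, f+2e, \dots, f+le)$, which are arithmetic progressions of primes of length $l+1$; reindexing $l$ gives the claim for every length.

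The main (and essentially only) obstacle is the bookkeeping for membership in $\mathcal S$: one must pick the common difference $e$ in the length-$l$ arithmetic progression large enough (divisible by enough small primes) that no prime's residue classes are covered. For a single arithmetic progression this is easy, since taking $e$ divisible by every prime $p\le l$ handles all small primes at once, and a trivial cardinality count handles all primes $p > l$. No new analytic input is needed — unlike genuine prime-progression results in $\Z$, here the progressions are manufactured inside $R_\tau$ by the construction in Theorem~\ref{t:main}, so the corollary is purely a matter of specializing $\mathcal D$.
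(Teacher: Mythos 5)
Your overall route is the same as the paper's---specialize Theorem~\ref{t:main} by choosing a suitable $\mathcal D\subseteq\mathcal S$---but as written the argument does not prove the corollary as stated, for two reasons. First, the corollary asserts the existence of a \emph{single} $\tau$ for which $R_\tau$ simultaneously has a cofinal set of twin primes and, for \emph{every} $l\in\N$, cofinal arithmetic progressions of primes of length $l$. You invoke Theorem~\ref{t:main} once with $\mathcal D=\{(2)\}$ and then again, separately for each fixed $l$, with $\mathcal D=\{(e,2e,\dots,le)\}$; this yields a different $\tau$ for each application. The fix is immediate---feed all the required sequences into one application of the theorem, e.g.\ take $\mathcal D$ to be the union of your chosen sequences, or simply $\mathcal D=\mathcal S$ as the paper does---but it has to be said.

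Second, your primary choice $e=\prod_{p\le l}p$ does not always put $(e,2e,\dots,le)$ into $\mathcal S$: whenever $l+1$ is prime, take $p=l+1$; then $p\nmid e$, so $e$ is invertible modulo $p$ and the $l+1$ residues $0,e,2e,\dots,le$ are pairwise distinct, i.e.\ the cosets exhaust $\Z/p\Z$. Concretely $(2,4)\notin\mathcal S$, since modulo $3$ the classes of $0,2,4$ are all of $\Z/3\Z$. You notice this in mid-argument, but the write-up ends in a hedge (``suffices after rechecking'') rather than a definite choice, so the main line of your part~(2) fails for infinitely many $l$. The repair you gesture at is correct: take $e$ divisible by every prime $\le l+1$; then primes $p\le l+1$ are handled by $p\mid e$ and primes $p>l+1$ by the cardinality bound $l+1<p$. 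The paper sidesteps the edge case by using the difference sequence $(l!,2l!,\dots,(l-1)l!)$, giving $l$-term progressions: primes $p\le l$ divide $l!$, and for $p>l$ the set of cosets has at most $l<p$ elements. Your part~(1), checking $(2)\in\mathcal S$, is fine.
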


\begin{proof} Use Theorem~\ref{t:main} for $\mathcal D = \mathcal S$. The first part then follows from $(2)\in\mathcal S$ whilst the second part utilizes that $(l!, 2l!,\dots, (l-1)l!)\in\mathcal S$ for each $l>1$.
\end{proof}

\appendix
\section{Logical connections}
\label{sec:appendix}

It follows from \cite[Section~2]{GS} that, in any weakly saturated Peano ring $S$, i.e.\ a~weakly saturated ring elementarily equivalent with $\Z$, all the rings $R_\tau$, $\tau\in\ZZ$, can be found as subrings. As a~consequence, every universal sentence in the language of rings which holds in $\Z$ (equivalently in $S$) holds in all the domains $R_\tau$ as well. If we fix a~particular embedding of $R_\tau$ into $S$, it also yields that, if $f\in R_\tau$ is a~prime in $S$, then it is a~prime in $R_\tau$. Naturally, the converse implication does not hold. It is also not true that the (non-negative parts of) rings $R_\tau$ satisfy induction for open formulas in the language of rings.

Since $\Z[x]\subseteq R_\tau$ holds for each $\tau\in\ZZ$, all the rings $R_\tau$ are, in particular, faithful $\Z[x]$-modules. More-or-less following Glivick\'y's Ph.D.\ thesis \cite{G} (see also \cite{GP}), we define a~first order theory $\ZLa$ in the language $\mathcal L = (+,-,0,1,\leq,\,\cdot f\mid f\in\Z[x])$. The axioms of $\ZLa$ are:

\begin{enumerate}
	\item all the axioms of (right) $\Z[x]$-modules;
	\item axioms expressing that $\leq$ is a linear order satisfying $0<1$, $v\leq 0\vee 1\leq v$ and $(v_1\leq v_2 \wedge w_1\leq w_2)\rightarrow v_1+w_1\leq v_2+w_2$;
  \item the axiom $0\leq v\rightarrow 0\leq v\cdot x$;
  \item the axiom scheme $1\cdot x \neq 1\cdot n$ for each non-negative integer $n$;
	\item the axiom scheme $0<1\cdot f\rightarrow (\exists w)\, w\cdot f \leq v < (w+1)\cdot f$ for each $f\in\Z[x]$ (i.e.\ Euclidean division by $1\cdot f$).
\end{enumerate}    

One of the main results of \cite{G}, Theorem~1.4:5, is a thorough description of the theory $\ZLa$. The author shows that it has elimination of quantifiers up to disjunction of positive primitive formulas. As a consequence, it is model-complete, i.e.\ whenever $A, B$ are models of $\ZLa$ and $A$ is a~submodel of $B$, then $A$ is an~elementary submodel of $B$. He also shows that the induction for all $\mathcal L$-formulas holds in (the non-negative part of) models of $\ZLa$. The most interesting result in our context is, however, the following one. 

\begin{prop} \label{p:primemodels} $($\cite[Theorem~1.4:5]{G}$)$ Consider $R_\tau$, $\tau\in\ZZ$, as $\mathcal L$-structures. Then $\{R_\tau\mid \tau\in\ZZ\}$ is a~representative set of all models of $\ZLa$ up to elementary equivalence. In fact, the $R_\tau$ are even all the unique prime models of $\ZLa$, i.e.\ for each model $M$ of $\ZLa$ there exists a~unique $\tau\in\ZZ$ such that $R_\tau$ (elementarily) embeds into $M$.
\end{prop}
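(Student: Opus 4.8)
The plan is to reduce the claim to the quantifier-elimination result for $\ZLa$ cited from \cite{G}, and to supply the two concrete ingredients it needs: that every $R_\tau$ is a model of $\ZLa$, and that an abstract model carries, encoded in the divisibility of its element $1$, a canonical $\tau\in\ZZ$ over which it embeds. First I would verify axioms (1)--(5) in each $R_\tau$. Axioms (1)--(4) are immediate from $\Z[x]\subseteq R_\tau\subseteq\Q[x]$, from the fact that the canonical order on $\Q[x]$ restricts to a discrete order on $R_\tau$ with $R_\tau\cap\Q=\Z$, and from $x\neq n$ in $\Q[x]$. The only scheme requiring care is the Euclidean-division scheme (5): given $f\in\Z[x]$ with positive leading coefficient and $g\in R_\tau$, I would divide in $\Q[x]$ to get $g=qf+s$ with $\deg s<\deg f$ and then correct $q$ to a suitable $y\in R_\tau$ by rounding, using positivity of the leading coefficient of $f$ and discreteness of the order exactly as in \cite[Section~2]{GS}, so that $y\cdot f\le g<(y+1)\cdot f$.

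Next, for an arbitrary $M\models\ZLa$, I would read off $\tau$ from the divisibility of $1$. By the basic structure theory of $\ZLa$ from \cite{G}, the cyclic submodule generated by $1$ is free of rank one, so $g\mapsto 1\cdot g$ is an $\mathcal L$-embedding $\Z[x]\hookrightarrow M$; and for each $p\in\PR$ the numbers $e_p(f):=\sup\{e\ge 0\mid p^e\text{ divides }1\cdot f\text{ in }M\}\in\N\cup\{\infty\}$, taken over nonzero $f\in\Z[x]$, are forced by the axioms to arise as $f\mapsto v_p(f(\tau_p))$ for a unique $\tau_p\in\Z_p$. This is the substantive input: $\ZLa$ constrains the divisibility pattern of $1$ tightly enough to make it come from a $p$-adic point, while uniqueness of $\tau_p$ holds because the linear polynomials $x-m$, $m\in\Z$, $p$-adically separate the points of $\Z_p$. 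Put $\tau=(\tau_p)_{p\in\PR}\in\ZZ$.

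I would then construct an $\mathcal L$-embedding $\varphi\colon R_\tau\to M$. An element $g\in R_\tau$ has the form $f/k$ with $f\in\Z[x]$, $k\in\N$ and $v_p(f(\tau_p))\ge v_p(k)$ for every $p$; by the choice of the $\tau_p$, $1\cdot f$ is then divisible in $M$ by $p^{v_p(k)}$ for each prime $p\mid k$, hence---these prime powers being pairwise coprime with product $k$---by $k$ itself. Since $M$ is torsion-free as an abelian group, there is a unique $h\in M$ with $h\cdot k=1\cdot f$, and I put $\varphi(g)=h$; one checks $\varphi$ is well defined, $\Z[x]$-linear and injective, and order-preserving and order-reflecting using that in any model of $\ZLa$ the sign of $1\cdot f$ equals the sign of $f$ as a polynomial---a fact from the development of \cite{G}, proved by induction on $\deg f$ from the order axioms together with (5). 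Model-completeness of $\ZLa$---a consequence of the elimination of quantifiers up to disjunctions of positive primitive formulas in \cite[Theorem~1.4:5]{G}---then upgrades $\varphi$ to an elementary embedding; in particular $M\equiv R_\tau$, which yields the ``representative set'' assertion, and each $R_\tau$ is a prime model of its theory. For uniqueness, any embedding $R_\tau\hookrightarrow M$ is then elementary and preserves $1$, so the invariants $e_p(f)$, which inside $R_\tau$ equal $v_p(f(\tau_p))$, coincide with those of $M$; as the latter are fixed, $\tau$ is determined.

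The main obstacle is concentrated in the cited input---the quantifier-elimination, and hence model-completeness, of $\ZLa$ from \cite{G}---without which there is no reason an abstract model should be elementarily equivalent to, let alone have as an elementary submodel, one of the explicit rings $R_\tau$. The remaining steps are elementary but slightly fiddly: the verification of the axioms (the rounding in (5)), the recovery of the $p$-adic components $\tau_p$ from the divisibility data of $1$, and the sign lemma behind the order-preservation of $\varphi$.
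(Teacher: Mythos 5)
The paper offers no proof of this proposition: it is imported wholesale from Glivick\'y's thesis \cite[Theorem~1.4:5]{G}, so there is no internal argument to compare yours against. Your sketch is a plausible reconstruction of how the proof goes, and its outline is sound; but be aware that it is not an independent proof, since the two load-bearing inputs --- the elimination of quantifiers up to disjunctions of positive primitive formulas (hence model-completeness of $\ZLa$), and the ``sign lemma'' that $1\cdot f$ has, in every model, the sign of the leading coefficient of $f$ --- are exactly the content you are citing back from the same source. Of the elementary steps you do supply, two are asserted where a short argument is really needed. First, to recover $\tau_p$ you need that the remainder produced by scheme (5) with the constant scalar $p^e$ is a \emph{standard} integer in $\{0,\dots,p^e-1\}$; this follows from the discreteness axiom $v\le 0\vee 1\le v$ applied to translates, and it is what makes the residues, and hence a unique $\tau_p\in\Z_p$, well defined. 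Second, your claim that $v_p(f(\tau_p))\ge v_p(k)$ forces $p^{v_p(k)}\mid 1\cdot f$ in $M$ is not automatic, because $1\cdot f$ is a scalar multiple of $1$, not an evaluation of $f$ at the element $1\cdot x$ (the language has no ring multiplication); it does hold, via the factorization $f-f(c)=(x-c)g$ in $\Z[x]$, which turns $p^e\mid 1\cdot(x-c)$ into divisibility of $1\cdot f-f(c)\cdot 1$ by $p^e$ in $M$. Similarly, verifying scheme (5) in $R_\tau$ requires correcting the quotient $q\in\Q[x]$ by a \emph{rational} constant chosen by CRT so that $q-c\in R_\tau$ (and then possibly shifting by $\pm 1$), not merely integer rounding. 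With these points made explicit, your argument delivers what the citation asserts.
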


For each $n\in\N$, let $A_n$ denote the abelian group $\Z$ considered as an $\mathcal L$-structure with the usual order relation $\leq$ and $\cdot x$ interpreted as multiplication by $n$. Then $A_n$ satisfies all the axioms of the theory $\ZLa$ bar the scheme $(4)$. By \cite[Corollary~1.4:6]{G}, the ultraproducts $\prod_{n\in\N}A_n/\,\mathcal U$ where $\mathcal U$ is a non-principal ultrafilter on~$\N$ also represent all the models of $\ZLa$ up to elementary equivalence.

Of course, this does not help to transfer, let us say, the result about twin primes from a~ring $R_\tau$ constructed in Corollary~\ref{c:special} to $\Z$. We have not expected that any such thing would be possible. On the other hand, with regard to the main results in~\cite{GP}, it could be interesting to have a~closer look at the behaviour of prime sequences also in the rings $R_{\tau_1,\tau_2} = \{f\in\Q[x,y]\mid f(\tau_1,\tau_2)\in\ZZ\}$ where $\tau_1,\tau_2\in\ZZ$ and $f(\tau_1, \tau_2)$ is evaluated in $\prod_{p\in\PR}\Q_p$. This might be a~much harder task though.

\bigskip

\noindent\textit{Acknowledgement.} We thank the anonymous referee for carefully reading our manuscript and for making several suggestions which helped us to further enhance the quality of the paper.


\end{document}